\newtheorem{thm}{Theorem}
\newtheorem{lemma}[thm]{Lemma}
\newtheorem{cor}[thm]{Corollary}
\newtheorem{prop}[thm]{Proposition}
\newtheorem{otherth}{\bf Theorem}
\newtheorem{otherl}[otherth]{\bf Lemma}
\newcommand{\Cn}{\mathbb{C}^n}
\newcommand{\Bn}{\mathbb{B}_ n}
\newcommand{\Sn}{\mathbb{S}_ n}
\def\C{\mathbb{C}}
\def\a{\alpha}
\def\e{\varepsilon}
\def\p{\varphi}
\def\s{\sigma}
\def\p{\varphi}
\def\inb{\int_{\Bn}}
\begin{document}

\title[Weak factorization and Hankel operators]
{Weak factorization and Hankel forms for weighted Bergman spaces
on the unit ball}

\author[J. Pau]
{Jordi Pau}
\address{
Jordi Pau\\
Department de Matem\`atica Aplicada i Analisi,
Universitat de Barcelona,
08007 Barcelona, Catalonia,
Spain}
\email{jordi.pau@ub.edu}

\author[R. Zhao]
{Ruhan Zhao}
\address{
Ruhan Zhao\\
Department of Mathematics,
SUNY Brockport,
Brockport, NY 14420,
USA}
\email{rzhao@brockport.edu}

\subjclass[2010]{32A36, 47B35, 47B38}

\keywords{Weak factorization, Hankel operators, Bergman spaces}

\thanks{This work started when the second named author visited the University of Barcelona in 2013. He thanks the support given by the IMUB during his visit.  The first author was
 supported by DGICYT grant MTM$2011$-$27932$-$C02$-$01$
(MCyT/MEC) and the grant 2014SGR289 (Generalitat de Catalunya)}


\begin{abstract}
We establish weak factorizations for a weighted Bergman space
$A^p_{\a}$, with $1<p<\infty$,
into two weighted Bergman spaces
on the unit ball of $\C^n$.
To obtain this result, we characterize bounded Hankel forms
on weighted Bergman spaces on the unit ball of $\C^n$.
\end{abstract}


\maketitle

\section{Introduction}
\label{intro}
A classical theorem of Riesz asserts that any function in the Hardy space $H^p$ on the unit disk can be factored as $f=Bg$ with $\|f\|_{H^p}=\|g\|_{H^p}$, where $B$ is a Blaschke product and $g$ is an $H^p$-function with no zeros on the unit disk. An immediate consequence of that result is that any function in the Hardy space $H^p$ admits a ``strong" factorization $f=f_ 1 f_ 2$ with $f_ 1 \in H^{p_ 1}$, $f_ 2\in H^{p_ 2}$ and $\|f_ 1\|_{H^{p_1}}\cdot \|f_ 2\|_{H^{p_ 2}}=\|f\|_{H^p}$, for any $p_ 1$ and $p_ 2$ determined by the condition $1/p=1/p_ 1+1/p_ 2$.  In \cite{ho}, C. Horowitz  obtained
strong factorizations of functions in a weighted Bergman space on the unit disk
into functions of two weighted Bergman spaces with the same weight (again using Blaschke products). These strong factorization results are no longer possible to obtain \cite{Gow} in the setting of Hardy and Bergman spaces in the unit ball of the complex euclidian space $\Cn$ of dimension $n$ when $n\ge 2$, but it is still possible to obtain some ``weak" factorizations for functions in these spaces.

For two Banach spaces  of functions, $A$ and $B$, defined on the same domain,
the weakly factored space $A\odot B$ is defined as the completion
of finite sums
$$
f=\sum_{k}\p_k\psi_k, \qquad \{\p_k\}\subset A, \ \{\psi_k\}\subset B,
$$
with the following norm:
$$
\|f\|_{A\odot B}=\inf\left\{\sum_k\|\p_k\|_A\|\psi_k\|_B:\,
f=\sum_{k}\p_k\psi_k\right\}.
$$
When $0<p\le1$, weak type factorizations for the Hardy spaces $H^p$ and the weighted Bergman spaces
$A^p_{\alpha}$ on the unit ball of $\C^n$ are well known
(see \cite{crw} and \cite{GL} for Hardy spaces; and \cite{cr}, \cite{Roc} or \cite[Corollary 2.33]{zhu2} for Bergman spaces).
However, when $1<p<\infty$, even for unweighted
Bergman spaces the problem was still open (see, for example \cite{bl}).

In this paper we completely solve the above problem for Bergman spaces by
establishing weak factorizations for a weighted Bergman space
$A^q_{\beta}$, with $1<q<\infty$ and $\beta>-1$,
into two weighted Bergman spaces with non necessarily the same weight,
on the unit ball $\Bn$ of $\C^n$. The following is our main result.

\begin{thm}\label{weak}
Let $1<q<\infty$ and $\beta>-1$.
Then
$$
A^q_{\beta}(\Bn)=A^{p_1}_{\a_1}(\Bn)\odot A^{p_2}_{\a_2}(\Bn)
$$
for any
$p_1,p_2>0$ and $\a_1,\a_2>-1$ satisfying
\begin{equation}\label{conjugate}
\frac{1}{p_1}+\frac{1}{p_2}=\frac{1}{q},\qquad
\frac{\a_1}{p_1}+\frac{\a_2}{p_2}=\frac{\beta}{q}.
\end{equation}
\end{thm}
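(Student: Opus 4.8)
The plan is to establish the two inclusions separately: the continuous embedding $A^{p_1}_{\a_1}(\Bn)\odot A^{p_2}_{\a_2}(\Bn)\subseteq A^q_{\beta}(\Bn)$ by a direct H\"older estimate, and the reverse inclusion by a duality argument whose engine is the characterization of bounded Hankel forms announced in the abstract.

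First I would dispose of the easy inclusion. Given a product $\varphi\psi$ with $\varphi\in A^{p_1}_{\a_1}$ and $\psi\in A^{p_2}_{\a_2}$, I split the weight as $(1-|z|^2)^{\beta}=(1-|z|^2)^{\beta_1}(1-|z|^2)^{\beta_2}$ with $\beta_i=q\a_i/p_i$, so that $\beta_1+\beta_2=\beta$ by the second relation in \eqref{conjugate}. Since the first relation gives $q/p_1+q/p_2=1$ (note also that each $p_i>q>1$, so we are throughout in the reflexive range), H\"older's inequality with the conjugate exponents $p_1/q$ and $p_2/q$ yields $\|\varphi\psi\|_{A^q_{\beta}}\le C\,\|\varphi\|_{A^{p_1}_{\a_1}}\|\psi\|_{A^{p_2}_{\a_2}}$. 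Summing over a representation $f=\sum_k\varphi_k\psi_k$, using the triangle inequality in $A^q_\beta$ and taking the infimum over representations, I get $\|f\|_{A^q_{\beta}}\le C\,\|f\|_{A^{p_1}_{\a_1}\odot A^{p_2}_{\a_2}}$; hence the inclusion map $\iota$ is bounded. It also has dense range, since every polynomial factors as (polynomial)$\cdot\,1$ with both factors in the respective Bergman spaces, and polynomials are dense in $A^q_{\beta}$.

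For the reverse inclusion I would argue by duality with the pairing $\langle f,g\rangle_{\beta}=\int_{\Bn}f\,\bar g\,dv_{\beta}$. A bounded linear functional on $A^{p_1}_{\a_1}\odot A^{p_2}_{\a_2}$ restricts on products to a bounded bilinear form depending only on the product $\varphi\psi$, i.e.\ to a bounded Hankel form on $A^{p_1}_{\a_1}\times A^{p_2}_{\a_2}$, and conversely; thus $\big(A^{p_1}_{\a_1}\odot A^{p_2}_{\a_2}\big)^{*}$ is identified with the space of bounded Hankel forms. On the other hand, every $g\in A^{q'}_{\beta}$ (with $1/q+1/q'=1$) induces the Hankel form $(\varphi,\psi)\mapsto\langle\varphi\psi,g\rangle_{\beta}$, which is bounded precisely because of the easy inclusion together with the Bergman duality $\big(A^q_{\beta}\big)^{*}=A^{q'}_{\beta}$. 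The key input, the characterization of bounded Hankel forms, supplies the converse: every bounded Hankel form is represented by some $g\in A^{q'}_{\beta}$ with $\|g\|_{A^{q'}_{\beta}}\lesssim\|T\|$. Combining these, the adjoint $\iota^{*}\colon\big(A^q_{\beta}\big)^{*}\to\big(A^{p_1}_{\a_1}\odot A^{p_2}_{\a_2}\big)^{*}$ becomes, under both realizations as $A^{q'}_{\beta}$, an isomorphism onto. A closed-range argument then finishes the proof: since $\iota^{*}$ is surjective, $\iota$ is bounded below, hence injective with closed range; being also densely ranged, $\iota$ is onto, and a bounded operator that is bounded below and onto is an isomorphism. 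Therefore $A^q_{\beta}(\Bn)=A^{p_1}_{\a_1}(\Bn)\odot A^{p_2}_{\a_2}(\Bn)$ with equivalent norms.

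I expect the main obstacle to be the hard direction of the Hankel form characterization, namely that a bounded Hankel form must have symbol in $A^{q'}_{\beta}$ with norm control. The novelty here is that $1<q<\infty$, so the product space $A^q_\beta$ is reflexive and its dual is a genuine Bergman space $A^{q'}_{\beta}$, rather than the classical situation $q\le 1$ where the dual is a Bloch- or Lipschitz-type space amenable to pointwise estimates. To recover the symbol I would test the form against normalized reproducing kernels (or suitably chosen families of products) and control $\|g\|_{A^{q'}_{\beta}}$ through a derivative or atomic description of that space, pushing the estimates through the boundedness of the weighted Bergman projection and Forelli--Rudin type bounds for the integrals $\int_{\Bn}(1-|z|^2)^{a}|1-\langle z,w\rangle|^{-c}\,dv(z)$. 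The delicate point is that the test product must be controlled simultaneously in both the $A^{p_1}_{\a_1}$ and $A^{p_2}_{\a_2}$ norms while probing the $A^{q'}_{\beta}$ size of the symbol, and making this two-sided weighted balance work for all admissible parameters satisfying \eqref{conjugate} is the technical heart of the argument.
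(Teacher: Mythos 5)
Your reduction of Theorem \ref{weak} to the Hankel-form characterization is essentially the paper's own proof, which consists of Proposition \ref{equiv} combined with Theorem \ref{hankel-form}: the easy inclusion by splitting the weight and applying H\"older, then a duality argument identifying $\bigl(A^{p_1}_{\a_1}\odot A^{p_2}_{\a_2}\bigr)^*$ with bounded Hankel forms and invoking the hard direction of the characterization. Your closed-range packaging ($\iota^*$ surjective $\Rightarrow$ $\iota$ bounded below, plus dense range $\Rightarrow$ onto) is a clean reformulation of the paper's Hahn--Banach computation $\|f\|_{A^{p_1}_{\a_1}\odot A^{p_2}_{\a_2}}=\sup_{\|F\|=1}|F(f)|\le\sup_{\|F\|=1}\|b_F\|_{q',\beta'}\,\|f\|_{q,\beta}$, and it even sidesteps the set-inclusion step that the paper attributes to atomic decomposition. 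Your choice of pairing $\langle\cdot,\cdot\rangle_{\beta}$ is the admissible special case $\a=\beta$ of Theorem \ref{hankel-form}: conditions \eqref{pa-ineq} hold because $1/p_1+1/p_2=1/q<1$ and $(1+\a_1)/p_1+(1+\a_2)/p_2=(1+\beta)/q<1+\beta$, and then \eqref{conjugate-q} forces $\beta'=\beta$, as you claim. One step you gloss over: Theorem \ref{hankel-form} is a statement about forms $T_b^{\a}$ with a \emph{given} holomorphic symbol, so before applying it to an abstract functional $F$ on the weakly factored space you must manufacture a symbol. The paper does this by restricting $F$ to $\p\mapsto F(1\cdot\p)$, which is bounded on $A^{p_2}_{\a_2}$, and using Theorem \ref{duality} to obtain a preliminary $b_F\in A^{p_2'}_{\a_2'}$ with $F(\p)=\langle \p,b_F\rangle_{\a}$; only afterwards does the characterization upgrade $b_F$ to $A^{q'}_{\beta'}$. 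This is short but cannot be skipped: without it, ``the space of bounded Hankel forms'' is not yet identified with a space of functions.

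The genuine gap is that the engine itself --- that boundedness of $T_b^{\a}$ forces $b\in A^{q'}_{\beta'}$ with $\|b\|_{q',\beta'}\lesssim\|T_b^{\a}\|$ --- is taken as a black box, and the method you sketch for proving it would not succeed. Testing the form against individual normalized reproducing kernels (or single products of such), however the Forelli--Rudin exponents are tuned, yields only pointwise conditions of Bloch/Carleson type, of the shape $(1-|z|^2)^{s}\,|R^{\a,t}b(z)|\lesssim\|T_b^{\a}\|$; in this ``estimates with loss'' regime such pointwise conditions are strictly weaker than the required integrability $b\in A^{q'}_{\beta'}$, since no test against a single kernel can detect $L^{q'}$-summability. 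This is precisely why the paper proves Theorem \ref{hankel} by Luecking's method: it tests against \emph{random} lattice sums $g_t=\sum_k\lambda_k r_k(t)g_k$ of kernel atoms, uses Khinchine's inequality to pass to a square-function bound \eqref{lp}, subharmonicity on the Bergman balls $\tilde D_k$ to discretize, and $\ell^{p_1/p_2}$--$\ell^{(p_1/p_2)'}$ duality to reach the lattice estimate \eqref{sf4} for $R^{\a,b}f$; finally atomic decomposition (Theorem \ref{AtomicD}) together with Theorem \ref{duality} converts \eqref{sf4} into $\|f\|_{q,\beta}\lesssim\|S_f^{\a}\|$. If citing the paper's Theorem \ref{hankel-form} as a known result is permitted, your argument correctly completes the proof and coincides with the paper's route; as a self-contained proof of Theorem \ref{weak}, it is missing exactly this component, which is the technical heart of the paper.
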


In this context, by ``$=$" we mean equality of the function spaces
and equivalence of the norms. The inclusion $A^{p_1}_{\a_1}\odot A^{p_2}_{\a_2}\subset A^q_{\beta}$ with the estimate $\|f\|_{q,\beta}\lesssim \|f\|_{A^{p_1}_{\a_1}\odot A^{p_2}_{\a_2}}$ is a direct consequence of Minkowski and H\"{o}lder inequalities, so that the interesting part is the other inclusion with the corresponding estimates for the norms.

Now we are going to recall the definition of the weighted Bergman spaces. First we need some notations. For any two points $z=(z_1,...,z_n)$ and $w=(w_1,...,w_n)$ in $\C^n$,
we use
$$
\langle z,w\rangle=z_1\bar w_1+\cdots+z_n\bar w_n
$$
to denote the inner product of $z$ and $w$, and
$$
|z|=\sqrt{\langle z,z\rangle}=\sqrt{|z_1|^2+\cdots+|z_n|^2}
$$
to denote the norm of $z$ in $\C^n$.
Let $\Bn=\{z\in \C^n:|z|<1\}$ be the unit ball in $\C^n$
and $\Sn =\{z\in \C^n: |z|=1\}$ be the unit sphere in $\C^n$.
Let $H(\Bn)$ be the space of all analytic functions on $\Bn$.
We use $dv$ to denote the normalized volume measure on $\Bn$
and $d\s$ to denote the normalized area measure on $\Sn$.
For $-1<\a<\infty$, we let
$dv_{\a}(z)=c_{\a}(1-|z|^2)^{\a}\,dv(z)$ denote the normalized
weighted volume measure on $\Bn$,
where $c_{\a}=\Gamma(n+\a+1)/[n!\Gamma(\a+1)]$.

For $0<p<\infty$ and $-1<\a<\infty$, let
$L^{p}(\Bn,dv_{\a})$ be the weighted Lebesgue space
which contains measurable functions $f$ on $\Bn$
such that
$$
\|f\|_{p,\a}^p=\int_{\Bn}|f(z)|^p\,dv_{\a}(z)<\infty.
$$
Denote by $A^p_{\a}=L^{p}(\Bn,dv_{\a})\cap \,H(\Bn)$,
the weighted Bergman space on $\Bn$, with the same norm as above.
If $\a=0$, we simply write them as $L^p(\Bn,dv)$ and $A^p$ respectively
and $\|f\|_p$ for the norm of $f$ in these spaces.

It is a well-known fact that to obtain weak factorization results is equivalent
to give a ``good" description of the boundedness of certain \emph{Hankel forms}.
A Hankel form is a bilinear form $B$ on a space of analytic functions
such that for any $f$ and $g$, $B(f,g)$ is a linear function of $fg$.
These forms have been extensively studied on Hardy spaces and on Bergman spaces.
For the case of the Hardy space on the unit disk,
a classical result by Nehari \cite{ne} says that the Hankel form
$$
B_b(f,g):=\langle fg,b\rangle
$$
(under the usual integral pair for Hardy spaces)
with an \emph{analytic symbol} $b$ is bounded on $H^2\times H^2$
if and only if $b\in BMOA$, the space of analytic functions
of bounded mean oscillation.
The proof used the fact that a function in $H^1$
can be factored into product of two functions in $H^2$.
Unfortunately, such strong factorization is not possible (see \cite{Gow}) for Hardy spaces in the unit ball $\Bn$ of $\C^n$.
However, Coifman, Rochberg and Weiss \cite{crw} were able to generalize Nehari's
result to the unit ball $\Bn$ by using a weak factorization of $H^1$.
Namely, they proved that
$$
H^2(\Bn)\odot H^2(\Bn)=H^1(\Bn).
$$

Our approach to the problem for weighted Bergman spaces on the unit ball
is the opposite to the one of Coifman, Rochberg and Weiss in \cite{crw}.
We first characterize boundedness of the Hankel forms
on weighted Bergman spaces, and with this result
the weak factorization easily follows.

Given $\alpha>-1$ and a holomorphic symbol function $b$
we define the associated Hankel type bilinear form $T_ b^{\a}$ for polynomials $f$ and $g$ by
$$
T_b^{\a}(f,g)=\langle fg,b\rangle_{\a},
$$
where the integral pair $\langle \,,\,\rangle_{\a}$ is defined as
\begin{equation}\label{int-par}
\langle \varphi,\psi \rangle_{\a}=\inb \varphi(z)\,\overline{\psi(z)} \,dv_{\alpha}(z).
\end{equation}
Since the polynomials are dense in the weighted Bergman spaces, the Hankel form $T_ b^{\a}$ is densely defined on $A^{p_1}_{\a_1}\times A^{p_2}_{\a_2}$ for any $p_ 1,p_ 2>0$ and any $\a_ 1,\a_ 2>-1$.
We say that $T_ b^{\a}$ is bounded on $A^{p_ 1}_{\a_1}\times A^{p_ 2}_{\a_2}$ if there exists a positive constant $C$ such that
$$|T_ b^{\a}(f,g)|\le C \|f\|_{p_ 1,\alpha_ 1} \|g\|_{p_ 2,\alpha_ 2}.$$
The norm of $T_b^{\a}$ is given by
$$
\|T_b^{\a}\|
=\|T_b^{\a}\|_{A^{p_1}_{\a_1}\times A^{p_2}_{\a_2}}
:=\sup\{|T_b^{\a}(f,g)|:\,\|f\|_{p_1,\a_1}=\|g\|_{p_2,\a_2}=1\,\}.
$$
The next result characterizes boundedness of the Hankel form $T_b^{\a}$ acting on
 $A^{p_1}_{\a_1}\times A^{p_2}_{\a_2}$. We will see in Section \ref{hankel forms} that this implies the weak factorization in Theorem \ref{weak}.

\begin{thm}\label{hankel-form}
Let $1<p_1, p_2<\infty$, and
$\a, \a_1,\a_2>-1$ satisfy
\begin{equation}\label{pa-ineq}
\frac{1}{p_1}+\frac1{p_2}<1,\qquad
\frac{1+\a_1}{p_1}+\frac{1+\a_2}{p_2}<1+\a.
\end{equation}
Then $T_b^{\a}$ is bounded on $A^{p_1}_{\a_1}\times A^{p_2}_{\a_2}$
if and only
$b\in A^{q'}_{\beta'}$,
where $q$ and $\beta$ are real numbers satisfying \eqref{conjugate}, and
 $q'$ and $\beta'$ are determined by the condition
\begin{equation}\label{conjugate-q}
\frac{1}{q}+\frac1{q'}=1,\qquad
\frac{\beta}{q}+\frac{\beta'}{q'}=\a.
\end{equation}
Furthermore, we have
$
\|T_b^{\a}\|\asymp \|b\|_{q',\beta'}
$
\end{thm}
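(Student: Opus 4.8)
The plan is to establish the two inequalities $\|T_b^{\a}\|\lesssim\|b\|_{q',\beta'}$ and $\|b\|_{q',\beta'}\lesssim\|T_b^{\a}\|$ separately. The arithmetic of \eqref{conjugate} and \eqref{conjugate-q} is what makes everything fit: adding the two weight identities gives the single relation I will use repeatedly,
$$
\a=\frac{\a_1}{p_1}+\frac{\a_2}{p_2}+\frac{\beta'}{q'},\qquad\text{while}\qquad\frac1{p_1}+\frac1{p_2}+\frac1{q'}=\frac1q+\frac1{q'}=1.
$$
The sufficiency is elementary and I would dispose of it first; the necessity is the real content and must be proved \emph{without} appealing to Theorem \ref{weak}, since it is precisely Theorem \ref{hankel-form} that is later used (through duality) to deduce the factorization.

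For sufficiency, suppose $b\in A^{q'}_{\beta'}$ and split the weight in the integral pair \eqref{int-par} according to the displayed identity,
$$
(1-|z|^2)^{\a}=(1-|z|^2)^{\a_1/p_1}(1-|z|^2)^{\a_2/p_2}(1-|z|^2)^{\beta'/q'}.
$$
Writing the integrand of $T_b^{\a}(f,g)$ as the product of $|f|(1-|z|^2)^{\a_1/p_1}$, $|g|(1-|z|^2)^{\a_2/p_2}$ and $|b|(1-|z|^2)^{\beta'/q'}$ and applying H\"older's inequality with the three admissible exponents $p_1,p_2,q'$, I obtain $|T_b^{\a}(f,g)|\lesssim\|f\|_{p_1,\a_1}\|g\|_{p_2,\a_2}\|b\|_{q',\beta'}$, which is $\|T_b^{\a}\|\lesssim\|b\|_{q',\beta'}$.

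For necessity the idea is to feed into $T_b^{\a}$ explicit test functions attached to a lattice and then read off the integral norm of $b$. For $w\in\Bn$ I would set $f_w(z)=(1-|w|^2)^{s_1}(1-\langle z,w\rangle)^{-t_1}$ and $g_w(z)=(1-|w|^2)^{s_2}(1-\langle z,w\rangle)^{-t_2}$, and fix the exponents by two requirements: the standard estimates for $\inb(1-|z|^2)^{t}|1-\langle z,w\rangle|^{-s}\,dv(z)$ should give $\|f_w\|_{p_1,\a_1}\asymp\|g_w\|_{p_2,\a_2}\asymp1$, and the product should be a scalar multiple of the reproducing kernel, $f_wg_w=(1-|w|^2)^{\gamma}(1-\langle z,w\rangle)^{-(n+1+\a)}$. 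A short computation using \eqref{pa-ineq} shows this forces $t_1+t_2=n+1+\a$ with both $t_ip_i>n+1+\a_i$ admissible, and that $\gamma=(n+1+\beta')/q'$. With this choice the reproducing property gives $T_b^{\a}(f_w,g_w)=(1-|w|^2)^{\gamma}\,\overline{b(w)}$, so boundedness already yields the pointwise growth bound $(1-|w|^2)^{\gamma}|b(w)|\lesssim\|T_b^{\a}\|$.

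The main obstacle, and the heart of the proof, is to upgrade this pointwise information to the \emph{integrability} statement $b\in A^{q'}_{\beta'}$: testing one point at a time only controls an $L^{\infty}$-type growth, and summing the single-point bounds by the triangle inequality loses the sharp exponent (it would produce $\ell^1$ where $\ell^{q'}$ is needed, and recovering $\ell^{q'}$ termwise is exactly the desired conclusion, hence circular). To get around this I would fix a sufficiently dense separated lattice $\{a_j\}$ in the Bergman metric, Rademacher functions $r_j$, and form the randomized symbols $F_{\omega}=\sum_jr_j(\omega)\lambda_jf_{a_j}$ and $G_{\omega}=\sum_jr_j(\omega)\mu_jg_{a_j}$, which for each fixed $\omega$ are single admissible test functions. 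Integrating the identity $T_b^{\a}(F_{\omega},G_{\omega})=\sum_{j,k}r_j(\omega)r_k(\omega)\lambda_j\mu_k\,T_b^{\a}(f_{a_j},g_{a_k})$ over $\omega$ annihilates the off-diagonal terms and leaves $\sum_j\lambda_j\mu_j(1-|a_j|^2)^{\gamma}\overline{b(a_j)}$, while on the other side the boundedness of $T_b^{\a}$ together with Khinchine's (Kahane's) inequality bounds $\int\|F_{\omega}\|_{p_1,\a_1}\|G_{\omega}\|_{p_2,\a_2}\,d\omega$ by $\|\{\lambda_j\}\|_{\ell^{p_1}}\|\{\mu_j\}\|_{\ell^{p_2}}$. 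Taking the supremum over the unit balls of $\ell^{p_1}$ and $\ell^{p_2}$ and using $1/p_1+1/p_2=1/q$ turns the left side into the $\ell^{q'}$ norm of $\{(1-|a_j|^2)^{\gamma}b(a_j)\}$, which by the standard lattice norm equivalence $\sum_j(1-|a_j|^2)^{n+1+\beta'}|b(a_j)|^{q'}\asymp\|b\|_{q',\beta'}^{q'}$ (recall $\gamma q'=n+1+\beta'$) is comparable to $\|b\|_{q',\beta'}$. This gives $\|b\|_{q',\beta'}\lesssim\|T_b^{\a}\|$ and closes the equivalence. The technical points I expect to fight with are the exponent bookkeeping in the construction of $f_w,g_w$ and, above all, verifying the Khinchine upper bound, i.e.\ that $\{f_{a_j}\}$ and $\{g_{a_j}\}$ behave like normalized atomic families so that the vector-valued square-function estimates hold uniformly in the lattice.
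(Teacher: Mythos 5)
Your sufficiency half (splitting $(1-|z|^2)^{\a}$ and applying three-exponent H\"older with $1/p_1+1/p_2+1/q'=1$) is correct, and your exponent bookkeeping for the test functions is also right, including $\gamma=(n+1+\beta')/q'$. The genuine gap is exactly at the step you defer: the Khinchine upper bound $\int_0^1\|F_{\omega}\|_{p_1,\a_1}\|G_{\omega}\|_{p_2,\a_2}\,d\omega\lesssim\|\lambda\|_{\ell^{p_1}}\|\mu\|_{\ell^{p_2}}$ is \emph{not} a standard fact under your constraints, and it cannot be obtained from Theorem \ref{AtomicD}. Your construction rigidly forces $t_1+t_2=n+1+\a$ (so that $f_wg_w$ is a multiple of the reproducing kernel), while Theorem \ref{AtomicD}(i), which is what underwrites the estimate $\|\sum_j r_j(\omega)\lambda_jf_{a_j}\|_{p_1,\a_1}\lesssim\|\lambda\|_{\ell^{p_1}}$, requires $t_i>n+(1+\a_i)/p_i$ for $p_i>1$. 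Summing these two requirements gives $t_1+t_2>2n+(1+\a_1)/p_1+(1+\a_2)/p_2$, so compatibility with $t_1+t_2=n+1+\a$ would need $n+(1+\a_1)/p_1+(1+\a_2)/p_2<1+\a$, a condition strictly stronger than \eqref{pa-ineq} which fails in general: for instance $n=1$, $p_1=p_2=4$, $\a=\a_1=\a_2=0$ satisfies \eqref{pa-ineq} but $1+1/4+1/4>1$. Nor is this merely a defect of the citation: the square-function estimate is genuinely exponent-sensitive. Testing equal coefficients on $\asymp\delta^{-n}$ lattice points on the shell $1-|a_j|^2\asymp\delta$ shows that $\int_{\Bn}\bigl(\sum_j|\lambda_j|^2|f_{a_j}|^2\bigr)^{p/2}dv_{\a}\lesssim\|\lambda\|_{\ell^p}^p$ forces $t\ge n/2+(1+\a)/p$, which for $p>2$ is strictly above the normalization threshold $(n+1+\a)/p$; and under \eqref{pa-ineq} at least one of $p_1,p_2$ exceeds $2$. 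So the estimate you need lives in a range of exponents where it is unproven (and delicate), and your argument is incomplete precisely there.

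The paper avoids this conflict by never asking the test functions to multiply to the reproducing kernel. It first reduces Theorem \ref{hankel-form} by duality to Theorem \ref{hankel}, i.e.\ to boundedness of $S^{\a}_f=P_{\a}(f\bar{\cdot}\,)$ from $A^{p_1}_{\a_1}$ to $A^{p_2'}_{\a_2'}$, and in the necessity direction it randomizes only \emph{one} family of atoms $g_k$ in $A^{p_1}_{\a_1}$, whose kernel exponent $b$ is free to be taken as large as needed (condition \eqref{b1}), because the kernel of $S^{\a}_f$ itself supplies the factor $(1-\langle a_k,w\rangle)^{-(n+1+\a)}$. The price is that one gets lattice bounds not on $f$ but on the fractional derivative $R^{\a,b}f(a_k)$, and the norm $\|f\|_{q,\beta}$ is then recovered by pairing with atomic decompositions of $A^{q'}_{\beta'}$ built from kernels of exponent $n+1+\a+b$. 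If you repair your scheme by allowing $t_1+t_2>n+1+\a$ (so both exponents can be large), your diagonal terms become $(1-|a_j|^2)^{\gamma}\overline{R^{\a,c}b(a_j)}$ with $c=t_1+t_2-(n+1+\a)$, and you then face exactly this recovery-by-duality step; carried out, your argument essentially turns into the paper's proof. A further minor point you should not skip: $T_b^{\a}$ is only densely defined on polynomials, so applying it to the non-polynomial test functions and invoking the reproducing formula for an arbitrary holomorphic symbol $b$ requires a limiting argument (e.g.\ dilations of $b$).
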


\textit{Remarks.}  Note that,
condition (\ref{pa-ineq}) guarantees that $q>1$ and $\beta'>-1$.
When $q$ and $\beta$ satisfy condition
(\ref{conjugate}), automatically we would have $\beta>-1$
(to see this, simply add two equations in (\ref{conjugate}) together). By a general duality theorem for weighted Bergman spaces (see Theorem \ref{duality} in Section \ref{pre}), the condition $b\in A^{q'}_{\beta'}$ means that the symbol $b$ belongs to the dual space of $A^q_{\beta}$ under the pairing given by \eqref{int-par}.\\

It turns out that boundedness of the Hankel form $T_b^{\a}$
is equivalent to boundedness of a (small) Hankel operator,
which we are going to introduce in a moment. Let $\a>-1$.
It is well-known that, the integral operator
$$
P_{\a}f(z)=\int_{\Bn}\frac{f(w)}{(1-\langle z,w\rangle)^{n+1+\a}}d\,v_{\a}(w)
$$
is the orthogonal projection from $L^2(\Bn,dv_{\a})$
onto the weighted Bergman space $A^2_{\a}$.
The above formula can be used to extend $P_{\a}$ to a linear operator
from $L^1(\Bn,dv_{\a})$ into $H(\Bn)$.
For $1<p<\infty$, $P_{\a}$ is a bounded operator
from $L^p(\Bn,dv_{\a})$ onto $A^p_{\a}$.

Denote by $\overline{A^p_{\a}}$ the conjugate
analytic functions $f$ on $\Bn$ that are in $L^p(\Bn,dv_{\a})$.
Clearly,
$$
\overline{A^p_{\a}}=\{\overline{f}:\,f\in A^p_{\a}\}.
$$
Let $Q_{\a}$ denote the orthogonal projection
from $L^2(\Bn,dv_{\a})$ onto $\overline{A^2_{\a}}$. Clearly one has
$$Q_ {\a} f(z)= \overline{P_{\a} \overline{f} (z)}=\int_{\Bn} \frac{f(w)}{(1-\langle w,z \rangle )^{n+1+\alpha} }\, dv_{\alpha}(w).$$
Given $f\in L^1(\Bn,dv_{\a})$ and a polynomial $g$,
the weighted (small) \emph{Hankel operator} is defined by
$$
h^{\a}_fg=Q_{\a}(fg).
$$
Due to the density of polynomials, the small Hankel operator $h^{\a}_f$ is densely defined on
the weighted Bergman space $A^p_{\a}$ for $1\le p<\infty$. We will study boundedness of the small Hankel operator with conjugate analytic symbols, that is, $h^{\a}_{\overline{f}}$ with $f\in H(\Bn)$,
from $A^{p_1}_{\a_1}$ to $\overline{A^{p_2}_{\a_2}}$ with $0<p_2<p_1<\infty$.

\begin{thm}\label{hankel}
Let $1<p_2<p_1<\infty$ and $\a_1,\a_2>-1$
such that
\begin{equation}\label{p-alpha}
\frac{1+\a_1}{p_1}<\frac{1+\a_2}{p_2}.
\end{equation}
Let $f\in H(\Bn)$ and $\a$ such that
\begin{equation}\label{alpha}
1+\a>\frac{1+\a_2}{p_2}.
\end{equation}
Then  $h^{\a}_{\bar{f}}:\,A^{p_1}_{\a_1}\to \overline{A^{p_2}_{\a_2}}$ is bounded if and only if
 $f\in A^q_{\beta}$, where $q$ and $\beta$ are real numbers such that
$$
\frac1q=\frac1{p_2}-\frac1{p_1},\qquad
\frac{\beta}q=\frac{\a_2}{p_2}-\frac{\a_1}{p_1}.
$$
Moreover,  we have $\|h^{\a}_f\|\asymp \|f\|_{q,\beta}$.
\end{thm}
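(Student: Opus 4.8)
The plan is to deduce Theorem \ref{hankel} from the Hankel‑form characterization in Theorem \ref{hankel-form} via a duality argument, since the two objects are essentially adjoint to one another. The starting point is an identity linking the small Hankel operator to the Hankel form. Because $Q_\a$ is the orthogonal (hence self‑adjoint, with respect to $\langle\,,\,\rangle_\a$) projection onto $\overline{A^2_\a}$ and fixes every conjugate analytic function, for a polynomial $g$ and an analytic $\psi$ one computes
\[
\langle h^\a_{\bar f}g,\bar\psi\rangle_\a
=\langle Q_\a(\bar f g),\bar\psi\rangle_\a
=\langle \bar f g,\bar\psi\rangle_\a
=\int_{\Bn} g(z)\psi(z)\overline{f(z)}\,dv_\a(z)
=T_f^\a(g,\psi).
\]
Thus pairing $h^\a_{\bar f}g$ against conjugate analytic functions recovers exactly the Hankel form $T_f^\a$ with symbol $f$; the density of polynomials guarantees that this identity, first established on a dense subset, extends appropriately.

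First I would combine this identity with the general duality theorem (Theorem \ref{duality}) to express the operator norm as a form norm. The dual of $\overline{A^{p_2}_{\a_2}}$ under the pairing $\langle\,,\,\rangle_\a$ is $\overline{A^{p_2'}_{\g}}$, where $1/p_2+1/p_2'=1$ and the weight $\g=p_2'\big(\a-\a_2/p_2\big)$ is forced by the requirement that the pairing be perfect; I would note that hypothesis \eqref{alpha} is precisely the condition $\g>-1$ that makes this space well defined. Consequently
\[
\|h^\a_{\bar f}\|
\asymp \sup\big\{|T_f^\a(g,\psi)|:\ \|g\|_{p_1,\a_1}=1,\ \|\psi\|_{p_2',\g}=1\big\}
=\|T_f^\a\|_{A^{p_1}_{\a_1}\times A^{p_2'}_{\g}}.
\]

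Next I would apply Theorem \ref{hankel-form} to $T_f^\a$ on $A^{p_1}_{\a_1}\times A^{p_2'}_{\g}$, after checking that its hypotheses translate into those of Theorem \ref{hankel}. The first inequality in \eqref{pa-ineq}, $1/p_1+1/p_2'<1$, is equivalent to $1/p_1<1/p_2$, i.e.\ to $p_2<p_1$; substituting $\g$ into the second inequality of \eqref{pa-ineq} and using $\g/p_2'=\a-\a_2/p_2$ collapses it exactly to $(1+\a_1)/p_1<(1+\a_2)/p_2$, which is \eqref{p-alpha}. Running the relations \eqref{conjugate} and \eqref{conjugate-q} for the pair $(p_1,\a_1),(p_2',\g)$ and passing to the conjugate exponent gives $1/q=1/p_2-1/p_1$ and $\beta/q=\a_2/p_2-\a_1/p_1$, so the symbol space $A^{q'}_{\beta'}$ produced by Theorem \ref{hankel-form} is exactly the space $A^q_\beta$ of the statement; the norm equivalence $\|T_f^\a\|\asymp\|f\|_{q,\beta}$ then transfers to $\|h^\a_{\bar f}\|\asymp\|f\|_{q,\beta}$.

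The main obstacle is the duality step: the pairing defining the form uses the weight $\a$ rather than the natural weight $\a_2$ of the space $\overline{A^{p_2}_{\a_2}}$, so one must invoke the correct \emph{mixed‑weight} duality (identifying the dual as $\overline{A^{p_2'}_{\g}}$ with the shifted weight $\g$) and verify that the equivalence constants stay uniform. Everything else is bookkeeping: once the parameters are matched, Theorem \ref{hankel-form} does all the analytic work, and no direct kernel estimates for $h^\a_{\bar f}$ are needed.
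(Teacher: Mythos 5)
Your proposal has a fatal circularity: it proves Theorem~\ref{hankel} by invoking Theorem~\ref{hankel-form}, but in this paper Theorem~\ref{hankel-form} has no independent proof --- it is itself \emph{deduced from} Theorem~\ref{hankel}. The logical direction of the paper is exactly the reverse of yours: the introduction establishes the duality identity $T_f^{\a}(g,h)=\langle h, S^{\a}_fg\rangle_{\a}$ and the parameter dictionary (the same bookkeeping you carry out, with $\gamma=p_2'(\a-\a_2/p_2)$, condition \eqref{alpha} equivalent to $\gamma>-1$, the first inequality of \eqref{pa-ineq} equivalent to $p_2<p_1$, and the second equivalent to \eqref{p-alpha}), and then states ``Therefore, Theorem \ref{hankel} implies Theorem~\ref{hankel-form}.'' Section~\ref{proof-thm1} opens with ``In this section we prove Theorem~\ref{hankel}, from which Theorem~\ref{hankel-form} follows.'' So what you have verified is that the two theorems are equivalent under the mixed-weight duality of Theorem~\ref{duality} --- a fact the paper already records --- but you have supplied no proof of either statement. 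All of the analytic content is missing.

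That analytic content is the actual proof in Section~\ref{proof-thm1}, which works directly with $S^{\a}_f=P_{\a}(f\bar{\,\cdot\,})$: the easy direction ($f\in A^q_{\beta}$ implies boundedness) follows from H\"older's inequality and the boundedness of $P_{\a}$ on $L^{p_2}(\Bn,dv_{\a_2})$ (this is where \eqref{alpha} is used); the hard direction uses Luecking's technique --- test functions built from an $r$-lattice via the atomic decomposition (Theorem~\ref{AtomicD}), Rademacher functions and Khinchine's inequality, a subharmonicity estimate on Bergman metric balls to get the discrete bound \eqref{sf4} on $R^{\a,b}f$ at the lattice points, then $\ell^{p_1/p_2}$--$\ell^{(p_1/p_2)'}$ duality and finally Bergman-space duality plus a second application of the atomic decomposition to pass from the discrete estimate to $\|f\|_{q,\beta}\lesssim\|S^{\a}_f\|$. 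None of this can be bypassed by the duality reduction: to make your argument non-circular you would have to prove Theorem~\ref{hankel-form} from scratch, which is a task of exactly the same difficulty as proving Theorem~\ref{hankel} itself.
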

\textit{Remarks.}
Condition (\ref{p-alpha}) guarantees that $\beta>-1$.
It is known that, when $0<p_2<p_1<\infty$,
$A^{p_1}_{\a_1}\subset A^{p_2}_{\a_2}$ if and only if
(\ref{p-alpha}) is true (see \cite[Theorem 70]{zz}). Hence the above result concerns the boundedness of $h^{\a}_f$ from a smaller space to a
larger space. Also, by \cite[Theorem 2.11]{zhu2}, condition \eqref{alpha} means that the integral operator $P_{\alpha}$ is a bounded projection from $L^{p_ 2}(\Bn,dv_{\alpha_ 2})$ onto $A^{p_ 2}_{\a_2}$.\\
\\
\medskip
If one considers the operator
$$
S^{\a}_fg=\overline{h_{\bar{f}}g(z)}=P_{\a}(f\overline{g}),
$$
clearly, the boundedness of $h^{\a}_{\overline{f}}$ is equivalent to the boundedness of $S^{\a}_f$
from $A^{p_1}_{\a_1}$ to $A^{p_2}_{\a_2}$,
and the norms of $h^{\a}_f$ and $S^{\a}_f$ are equivalent. Now, if $g\in A^{p_ 1}_{\a_ 1}$ and $h\in A^{p_2}_{\a_2}$,
by Fubini's theorem we easily obtain
$$
T_f^{\a}(g,h)=\langle gh, f\rangle_{\a}=\langle h, P_{\a}(f\bar g)\rangle_{\a}=\langle h, S^{\a}_fg\rangle_{\a}.
$$
Hence, for $p_ 2>1$, by duality (see Theorem \ref{duality} in Section \ref{pre}), the Hankel form
$T_f^{\a}$ is bounded on $A^{p_1}_{\a_1}\times A^{p_2}_{\a_2}$ if and only if the small Hankel operator
$h^{\a}_{\overline{f}}$ is bounded from $A^{p_1}_{\a_1}$ to $\overline{A^{p_2'}_{\a_2'}}$,
with equivalent norms. Here, the numbers $\alpha_ 2 '$ and $p'_ 2$ are defined by the relation
$$ \frac{1}{p_ 2}+\frac{1}{p'_ 2}=1,\qquad
\alpha=\frac{\a_ 2}{p_ 2}+\frac{\a'_ 2}{p_ 2'}.$$
Comparing Theorem~\ref{hankel-form} with Theorem \ref{hankel}, notice that the first inequality in (\ref{pa-ineq}) is equivalent to
condition $1<p_2'<p_1<\infty$. Also,
when $p_2$ and $\a_2$ are replaced by $p_2'$ and $\a_2'$,
condition (\ref{alpha}) turns out to be equivalent to $\a_ 2>-1$, and therefore is always satisfied; and the second inequality in (\ref{pa-ineq})
is equivalent to condition (\ref{p-alpha}). Therefore, Theorem \ref{hankel}
implies Theorem~\ref{hankel-form}.\\

In the case of the same weights, that is, when $\alpha_ 1=\alpha_ 2=\beta=\alpha$, all the restrictions in Theorem \ref{hankel} reduces to $p_ 2>1$. We isolate this case here, since it proves a conjecture in \cite{bl}.
\begin{cor}\label{C4}
Let $\alpha>-1$, $1<p_ 2<p_ 1<\infty$ and $f\in H(\Bn)$. Then $h^{\a}_ {\overline{f}}:A^{p_ 1}_{\a}\rightarrow \overline{A^{p_ 2}_{\a}}$ is bounded if and only if $f\in A^{q}_{\alpha}$, with $q=\frac{p_ 1 p_ 2}{p_ 1-p_ 2}$.
\end{cor}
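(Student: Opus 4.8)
The plan is to obtain this statement as an immediate specialization of Theorem \ref{hankel}, by setting $\a_1=\a_2=\a$ and checking that, under this choice, all the structural hypotheses of that theorem collapse to the single requirement $p_2>1$, which is already part of the hypothesis here. So essentially no new work is needed beyond substituting into the formulas of Theorem \ref{hankel} and simplifying.

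First I would verify the two conditions (\ref{p-alpha}) and (\ref{alpha}). With $\a_1=\a_2=\a$, condition (\ref{p-alpha}) reads $\frac{1+\a}{p_1}<\frac{1+\a}{p_2}$; since $1+\a>0$ and $p_2<p_1$, this is automatic. Condition (\ref{alpha}) reads $1+\a>\frac{1+\a}{p_2}$, which, after dividing by $1+\a>0$, is exactly the inequality $p_2>1$. Hence both hypotheses of Theorem \ref{hankel} are satisfied, and the theorem applies directly.

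Next I would compute the resulting exponent $q$ and weight $\beta$. From $\frac1q=\frac1{p_2}-\frac1{p_1}=\frac{p_1-p_2}{p_1p_2}$ we obtain $q=\frac{p_1p_2}{p_1-p_2}$, as claimed. For the weight, $\frac{\beta}{q}=\frac{\a_2}{p_2}-\frac{\a_1}{p_1}=\a\bigl(\frac1{p_2}-\frac1{p_1}\bigr)=\frac{\a}{q}$, so $\beta=\a$. Therefore Theorem \ref{hankel} yields that $h^{\a}_{\overline{f}}:A^{p_1}_{\a}\to\overline{A^{p_2}_{\a}}$ is bounded if and only if $f\in A^{q}_{\a}$ with $q=\frac{p_1p_2}{p_1-p_2}$, together with the norm equivalence $\|h^{\a}_f\|\asymp\|f\|_{q,\a}$ inherited from that theorem.

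Since the corollary is a direct particular case, there is no genuine obstacle: the entire content lies in performing the substitution correctly. The only point demanding a little care is keeping track of the direction of the inequalities, which relies on the positivity of $1+\a$ to reduce (\ref{p-alpha}) and (\ref{alpha}) to $p_2<p_1$ and $p_2>1$, respectively.
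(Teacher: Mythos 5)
Your proof is correct and matches the paper exactly: the authors also obtain Corollary \ref{C4} as an immediate specialization of Theorem \ref{hankel}, noting that with $\alpha_1=\alpha_2=\beta=\alpha$ conditions (\ref{p-alpha}) and (\ref{alpha}) reduce to $p_2<p_1$ and $p_2>1$ respectively, and that the formulas force $q=\frac{p_1p_2}{p_1-p_2}$ and $\beta=\alpha$. Nothing is missing.
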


The paper is organized as follows: in Section \ref{pre} we give some necessary concepts and recall some key results  which
are needed in our proof of the main result.
In Section \ref{hankel forms} we give in detail the connection between weak factorizations and Hankel forms.
The proof of Theorem \ref{hankel} is given in Section \ref{proof-thm1}.

In the following, the notation $A\lesssim B$ means
that there is a positive constant $C$ such that $A\leq CB$,
and the notation $A\asymp B$ means that both $A\lesssim B$ and $B\lesssim A$ hold.

\section{Preliminaries}
\label{pre}
We need the following duality theorem. In this generality
the result is due to Luecking \cite{l1}
(see also, Theorem 2.12 in \cite{zhu2}).

\begin{otherth}\label{duality}
Suppose $\beta,\beta'>-1$ and $1<q<\infty$. Then
$$
(A^{q}_{\beta})^*=A^{q'}_{\beta'}
$$
(with equivalent norms)
under the integral pair $\langle \,,\,\rangle_{\alpha}$ given by \eqref{int-par},
where
$$
\frac{1}{q}+\frac{1}{q'}=1,\qquad
\alpha=\frac{\beta}{q}+\frac{\beta'}{q'}.
$$
\end{otherth}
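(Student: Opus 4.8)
The plan is to reproduce the classical duality argument of Luecking \cite{l1} (see also \cite[Theorem 2.12]{zhu2}), splitting the statement into the containment $A^{q'}_{\beta'}\subset (A^q_\beta)^*$, the reverse containment, and the comparison of norms.

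For the first containment I would fix $g\in A^{q'}_{\beta'}$ and estimate the pairing \eqref{int-par} directly. The key observation is to split the weight according to the second relation $\alpha=\beta/q+\beta'/q'$, writing $(1-|z|^2)^{\alpha}=(1-|z|^2)^{\beta/q}\,(1-|z|^2)^{\beta'/q'}$, and then to apply H\"older's inequality with exponents $q$ and $q'$. This gives $|\langle f,g\rangle_{\alpha}|\lesssim \|f\|_{q,\beta}\,\|g\|_{q',\beta'}$, which simultaneously shows that $g$ induces a bounded functional on $A^q_\beta$ and yields one half of the norm equivalence.

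For the reverse containment I would take an arbitrary $\Lambda\in (A^q_\beta)^*$, extend it by Hahn--Banach to a bounded functional on $L^q(\Bn,dv_\beta)$, and use the Riesz representation $(L^q(dv_\beta))^*=L^{q'}(dv_\beta)$ to obtain $h\in L^{q'}(\Bn,dv_\beta)$ with $\|h\|_{L^{q'}(dv_\beta)}\lesssim \|\Lambda\|$ and $\Lambda(f)=\int_{\Bn}f\,\overline{h}\,dv_\beta$ for every $f\in A^q_\beta$. The task is then to replace the non-analytic symbol $h$ by an analytic one. For a polynomial $f$ I would insert the reproducing identity $f=P_\alpha f$ (valid on polynomials, since they belong to $A^2_\alpha$) and apply Fubini's theorem; this transfers the Bergman kernel onto $h$ and rewrites $\Lambda(f)=\langle f,g\rangle_\alpha$, where $g=c\,P_\alpha\big(h\,(1-|\cdot|^2)^{\beta-\alpha}\big)$ is analytic. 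Because the polynomials are dense in $A^q_\beta$ and, once $g$ is known to lie in $A^{q'}_{\beta'}$, both sides are continuous, the representation extends to all of $A^q_\beta$.

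The main obstacle is to show that indeed $g\in A^{q'}_{\beta'}$ with $\|g\|_{q',\beta'}\lesssim \|h\|_{L^{q'}(dv_\beta)}$. Setting $\psi=h\,(1-|\cdot|^2)^{\beta-\alpha}$ so that $g=c\,P_\alpha\psi$, the two defining relations give $\alpha-\beta=(\beta'-\beta)/q'$, hence $(\beta-\alpha)q'+\beta'=\beta$ and therefore $\|\psi\|_{L^{q'}(dv_{\beta'})}\asymp \|h\|_{L^{q'}(dv_\beta)}$. The estimate thus reduces exactly to the boundedness of the projection $P_\alpha\colon L^{q'}(\Bn,dv_{\beta'})\to A^{q'}_{\beta'}$, which by \cite[Theorem 2.11]{zhu2} holds as soon as $1+\alpha>(1+\beta')/q'$; substituting $\alpha=\beta/q+\beta'/q'$ this is equivalent to $(1+\beta)/q>0$ and hence automatic from $\beta>-1$. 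Granting this projection bound, H\"older gives the reverse norm inequality $\|g\|_{q',\beta'}\lesssim\|\Lambda\|$, injectivity of $g\mapsto\langle\cdot,g\rangle_\alpha$ follows by testing against monomials, and the three parts together yield $(A^q_\beta)^*=A^{q'}_{\beta'}$ with equivalent norms. I expect the genuine analytic content to sit inside the cited projection theorem, whose proof rests on Schur's test combined with the Forelli--Rudin estimates for $\int_{\Bn}(1-|w|^2)^{t}\,|1-\langle z,w\rangle|^{-(n+1+t+s)}\,dv(w)$.
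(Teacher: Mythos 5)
Your proposal is correct, and it is essentially the classical argument behind this result: the paper itself offers no proof of Theorem~A, citing it as known from Luecking \cite{l1} and \cite[Theorem 2.12]{zhu2}, and the proof in those sources proceeds exactly as you do --- H\"older's inequality after splitting $(1-|z|^2)^{\alpha}=(1-|z|^2)^{\beta/q}(1-|z|^2)^{\beta'/q'}$ for the easy containment, then Hahn--Banach extension, Riesz representation in $L^{q'}(dv_{\beta})$, and the boundedness of the projection $P_{\alpha}\colon L^{q'}(\Bn,dv_{\beta'})\to A^{q'}_{\beta'}$ (via Schur's test and the Forelli--Rudin estimates) to produce the analytic representative. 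Your index bookkeeping, including the reduction of the projection criterion $1+\alpha>(1+\beta')/q'$ to the automatic condition $(1+\beta)/q>0$, is accurate; the only step worth making explicit is the justification of Fubini when inserting $f=P_{\alpha}f$, which follows routinely since $f$ is a polynomial and $h\in L^{q'}(dv_{\beta})$ pairs against the logarithmic bound for the borderline Forelli--Rudin integral.
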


We need the following well known integral estimate that can be found, for example, in \cite[Theorem 1.12]{zhu2}.
\begin{otherl}\label{LA}
Let $t>-1$ and $s>0$. There is a positive constant $C$ such that
\[ \int_{\Bn} \frac{(1-|w|^2)^t\,dv(w)}{|1-\langle z,w\rangle |^{n+1+t+s}}\le C\,(1-|z|^2)^{-s}\]
for all $z\in \Bn$.
\end{otherl}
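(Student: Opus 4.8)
This is a one-sided Forelli--Rudin estimate, and the plan is to peel off the radial and spherical variables and reduce everything to a one-dimensional integral. Writing $c=1+t+s>0$, so that the exponent is $n+c$, I would first pass to polar coordinates: with the normalized measures one has $\int_{\Bn}F\,dv=2n\int_0^1 r^{2n-1}\int_{\Sn}F(r\zeta)\,d\sigma(\zeta)\,dr$, and since $\langle z,r\zeta\rangle=r\langle z,\zeta\rangle$ this gives
$$\inb\frac{(1-|w|^2)^t\,dv(w)}{|1-\langle z,w\rangle|^{n+c}}=2n\int_0^1 r^{2n-1}(1-r^2)^t\Big(\int_{\Sn}\frac{d\sigma(\zeta)}{|1-\langle rz,\zeta\rangle|^{n+c}}\Big)\,dr.$$
Thus the estimate splits into a uniform bound for the inner spherical integral and a final radial integral.

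The key input is the spherical estimate
$$\int_{\Sn}\frac{d\sigma(\zeta)}{|1-\langle u,\zeta\rangle|^{n+c}}\le C'\,(1-|u|^2)^{-c},\qquad u\in\Bn,\ c>0,$$
which I would prove by rotating $u$ to $(|u|,0,\dots,0)$, so that the integrand depends on $\zeta$ only through $\zeta_1$, and then pushing $d\sigma$ forward under $\zeta\mapsto\zeta_1$. For $n\ge 2$ this pushforward is the measure $(n-1)(1-|\xi|^2)^{n-2}\,dA(\xi)/\pi$ on the disk $\D$, so the spherical integral becomes $\int_{\D}(1-|\xi|^2)^{n-2}\,|1-|u|\bar\xi|^{-(n+c)}\,dA(\xi)$, which is the one-variable Forelli--Rudin integral on the disk with weight exponent $n-2>-1$ and is therefore $\lesssim(1-|u|^2)^{-c}$; the case $n=1$ is the elementary integral over the circle. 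Applying this with $u=rz$ and using $1-r^2|z|^2\ge 1-r|z|$ bounds the inner integral by $(1-r|z|)^{-c}$.

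It then remains to estimate the radial integral. Since $r^{2n-1}\le 1$ and $(1-r^2)^t\asymp(1-r)^t$, the right-hand side above is controlled by $\int_0^1(1-r)^t(1-r|z|)^{-(1+t+s)}\,dr$, and a standard one-dimensional computation --- using $t>-1$ for integrability and $s>0$ to produce the singular exponent --- gives $\int_0^1(1-r)^t(1-\rho r)^{-(1+t+s)}\,dr\asymp(1-\rho)^{-s}$ for $\rho=|z|\in[0,1)$; since $1-|z|\asymp 1-|z|^2$ this yields the claimed bound $\lesssim(1-|z|^2)^{-s}$. The main obstacle is really the spherical estimate, equivalently the base disk case, and I expect the reduction to the disk together with the one-dimensional Forelli--Rudin bound to be precisely where the hypotheses $t>-1$ and $s>0$ are genuinely used. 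An alternative self-contained route, avoiding the spherical lemma altogether, is to write $|1-\langle z,w\rangle|^{-(n+c)}=(1-\langle z,w\rangle)^{-a}(1-\langle w,z\rangle)^{-a}$ with $a=(n+c)/2$, expand both factors in their binomial series, integrate term by term against $(1-|w|^2)^t\,dv$ using orthogonality of monomials and the explicit Beta-function moments, and read off the growth $(1-|z|^2)^{-s}$ from the asymptotics of the resulting Gauss hypergeometric series as $|z|^2\to 1^-$.
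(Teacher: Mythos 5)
The paper gives no proof of this lemma: it is stated as a known estimate (Lemma A) with a citation to Theorem 1.12 of Zhu's book, so your argument can only be compared against the proof in that reference. Your proof is correct, and it is genuinely different from Zhu's. Zhu's argument is exactly what you describe at the end as an alternative route: factor $|1-\langle z,w\rangle|^{-(n+1+t+s)}$ into two conjugate binomial factors, expand both in series, integrate term by term using orthogonality of monomials, and read off the growth $(1-|z|^2)^{-s}$ from Stirling asymptotics of the Gamma-function coefficients. Your main route instead reduces dimension geometrically: polar coordinates split the ball integral into a spherical and a radial part; rotation invariance of $\sigma$ plus the slice-integration formula $\int_{\Sn}f(\zeta_1)\,d\sigma(\zeta)=\frac{n-1}{\pi}\int_{\D}f(\xi)(1-|\xi|^2)^{n-2}\,dA(\xi)$ (for $n\ge2$) converts the spherical integral into the disk Forelli--Rudin integral with weight exponent $n-2>-1$; and the one-dimensional computation $\int_0^1(1-r)^t(1-\rho r)^{-(1+t+s)}\,dr\asymp(1-\rho)^{-s}$ (where $t>-1$ and $s>0$ are precisely what is needed) finishes both the disk case and the ball case. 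The logical order you indicate is the right one and is not circular: the elementary circle estimate gives the disk ($n=1$) case of the lemma, the disk case plus the pushforward gives the spherical estimate for $n\ge2$, and that plus the same radial computation gives the ball case; the auxiliary inequalities you invoke ($1-r^2|z|^2\ge 1-r|z|$, $(1-r^2)^t\asymp(1-r)^t$, $1-|z|\asymp 1-|z|^2$, and the normalization of the pushforward measure) are all valid. What your approach buys is a proof using only one-variable calculus plus the slicing formula, with no special-function asymptotics; what Zhu's series computation buys is the two-sided estimate $\asymp(1-|z|^2)^{-s}$, together with the logarithmic case $s=0$ and boundedness for $s<0$, essentially for free --- none of which is needed for the one-sided bound quoted in the paper.
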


For any $a\in \Bn$ with $a\neq 0$, we denote by
$\p_a(z)$ the M\"obius transformation on $\Bn$
that exchanges $0$ and $a$. It is known that, for
any $z\in\Bn$
$$
\p_a(z)=\frac{a-P_a(z)-s_aQ_a(z)}{1-\langle z,a\rangle},
$$
where $s_a =1-|a|^2$ , $P_a$ is the orthogonal projection
from $\C^n$ onto the one dimensional subspace $[a]$ generated by $a$,
and $Q_a$ is the orthogonal projection from $\C^n$ onto the
orthogonal complement of $[a]$. When $a=0$, $\p_a(z)=-z$.
$\p_a$ has the following properties: $\p_a\circ\p_a(z)=z$,
and
$$
1-|\p_a(z)|^2=\frac{(1-|a|^2)(1-|z|^2)}{|1-\langle z,a\rangle|^2}.
$$
For $z,w\in\Bn$, the \emph{pseudo-hyperbolic distance} between $z$ and $w$
is defined by
$$
\rho(z,w)=|\p_z(w)|,
$$
and the \emph{hyperbolic distance} on $\Bn$ between $z$ and $w$
induced by the Bergman metric is given by
$$
\beta(z,w)=\tanh\,\rho(z,w).
$$
For $z\in\Bn$ and $r>0$, the \emph{Bergman metric ball} at $z$ is given by
$$
D(z,r)=\{w\in\Bn:\,\beta(z,w)<r\}.
$$
It is known that, for a fixed $r>0$, the weighted volume
$$
v_{\a}(D(z,r))\asymp (1-|z|^2)^{n+1+\a}.
$$
We refer to \cite{zhu2} for all of the above facts.

A sequence $\{a_k\}$ of points in $\Bn$ is a \emph{separated sequence} (in Bergman metric)
if there exists a positive constant $\delta>0$ such that
$\beta(z_i,z_j)>\delta$ for any $i\neq j$.
We need a well-known result on decomposition of the unit ball $\Bn$.
The following version is Theorem 2.23 in \cite{zhu2}

\begin{otherl}\label{covering}
There exists a positive integer $N$ such that for any $0<r<1$ we can
find a sequence $\{a_k\}$ in $\Bn$ with the following properties:
\begin{itemize}
\item[(i)] $\Bn=\cup_{k}D(a_k,r)$.
\item[(ii)] The sets $D(a_k,r/4)$ are mutually disjoint.
\item[(iii)] Each point $z\in\Bn$ belongs to at most $N$ of the sets $D(a_k,4r)$.
\end{itemize}
\end{otherl}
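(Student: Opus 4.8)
The plan is to build the sequence $\{a_k\}$ as a \emph{maximal $r/2$-separated set} in the Bergman metric $\beta$ and then check the three properties, the decisive tool being the M\"obius-invariant measure $d\lambda(z)=dv(z)/(1-|z|^2)^{n+1}$, for which the volume $\lambda(D(a,s))$ of a metric ball does not depend on the center $a$. First I would recall that $\beta$ is a genuine metric, invariant under the automorphisms $\p_a$, so that the triangle inequality holds and $\p_a(D(0,s))=D(a,s)$, and that $\lambda$ is $\p_a$-invariant. Consequently $\lambda(D(a,s))=\lambda(\p_a(D(0,s)))=\lambda(D(0,s))=:V(s)$ for every $a\in\Bn$, a finite positive quantity depending only on $s$. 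Fixing $0<r<1$, I consider the family of subsets $E\subset\Bn$ that are $r/2$-separated, i.e. $\beta(z,w)\ge r/2$ for distinct $z,w\in E$; ordered by inclusion this family has the property that the union of any chain is again $r/2$-separated, so Zorn's lemma yields a maximal such set $\{a_k\}$, which is automatically countable since the disjoint balls $D(a_k,r/4)$ force countability in the separable space $\Bn$.

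Properties (i) and (ii) then follow from soft arguments. For (ii), if some $w$ lay in $D(a_i,r/4)\cap D(a_j,r/4)$ with $i\ne j$, the triangle inequality would give $\beta(a_i,a_j)\le\beta(a_i,w)+\beta(w,a_j)<r/2$, contradicting separation; hence the balls $D(a_k,r/4)$ are pairwise disjoint. For (i), maximality means no point can be added while keeping $r/2$-separation, so every $z\in\Bn$ satisfies $\beta(z,a_k)<r/2$ for some $k$; thus $\Bn=\cup_k D(a_k,r/2)\subset\cup_k D(a_k,r)$.

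The real work is property (iii), where the integer $N$ must be independent of $r$. Fix $z\in\Bn$ and set $K=\{k:\,z\in D(a_k,4r)\}$. For $k\in K$ and any $w\in D(a_k,r/4)$ the triangle inequality gives $\beta(z,w)\le\beta(z,a_k)+\beta(a_k,w)<4r+r/4<5r$, so $D(a_k,r/4)\subset D(z,5r)$; since these balls are disjoint by (ii), summing their invariant volumes yields $|K|\,V(r/4)=\sum_{k\in K}\lambda(D(a_k,r/4))\le\lambda(D(z,5r))=V(5r)$, whence $|K|\le V(5r)/V(r/4)$.

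The main obstacle is controlling this ratio uniformly in $r$. I would verify that $r\mapsto V(5r)/V(r/4)$ extends continuously to $[0,1]$: its limit as $r\to0^+$ is $20^{2n}$, because $\beta$-balls of small radius are comparable to Euclidean balls and $d\lambda\approx dv$ there, while at $r=1$ both $V(5)$ and $V(1/4)$ are finite and positive. Hence $N:=\lceil\sup_{0<r<1}V(5r)/V(r/4)\rceil$ is finite and depends only on $n$, and it simultaneously bounds every overlap count, completing the proof. The only delicate points are the $a$-independence of $V(s)$ and the finiteness of the $\lambda$-volume of a $\beta$-ball of finite radius, both of which rest on the M\"obius invariance recalled at the outset.
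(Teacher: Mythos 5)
Your proof is correct and is essentially the standard argument: the paper itself gives no proof, quoting the result as Theorem 2.23 of Zhu's book, whose proof likewise takes a maximal $r/2$-separated sequence, gets (i) from maximality and (ii) from the triangle inequality, and bounds the overlap count in (iii) by the ratio $V(5r)/V(r/4)$ of M\"obius-invariant volumes, which is uniformly bounded for $0<r<1$. The one point to flag is that this uniformity requires the genuine Bergman metric $\beta=\tanh^{-1}\rho$ (the paper's formula $\beta(z,w)=\tanh\rho(z,w)$ is a misprint, under which $D(z,5r)$ would be all of $\Bn$, of infinite invariant volume, for $r$ near $1$), and your computation of the limit $20^{2n}$ and of the finiteness of $V(5)$ implicitly and correctly uses the standard definition.
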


Any sequence $\{a_k\}$ satisfying the conditions of the above lemma is called a \emph{lattice} (or an $r$-$lattice$ if one wants to stress the dependence on $r$) in the Bergman metric. Obviously any $r$-lattice is separated.

For convenience, we will denote by $D_k=D(a_k,r)$ and $\tilde{D}_k=D(a_k,4r)$.
Then Lemma~\ref{covering} says that $\Bn=\cup_{k=1}^{\infty}D_k$
and there is an positive integer $N$ such that every point $z$ in $\Bn$
belongs to at most $N$ of sets $\tilde{D}_k$.

We also need the following atomic decomposition theorem for
weighted Bergman spaces. This turns out to be a powerful theorem in the theory of Bergman spaces.
The result is basically due to Coifman and Rochberg \cite{cr},
and can be found in Chapter 2 of \cite{zhu2}.
\begin{otherth}\label{AtomicD}
Suppose $p>0$, $\a>-1$, and
$$
b>n\max\left(1,\frac1p\right)+\frac{1+\a}{p}.
$$
Then we have
\begin{itemize}
\item[(i)] For any separated sequence $\{a_k\}$ in $\Bn$
and any sequence $\lambda=\{\lambda_k\}\in \ell^p$, the function
$$
f(z)
=\sum_{k=1}^{\infty} \lambda_k \frac{(1-|a_k|^2)^{b-(n+1+\a)/p}}{(1-\langle z,a_ k\rangle)^{b}}
$$
belongs to $A^p_{\a}$ and
$$
\|f\|_{p,\a}\lesssim\|\lambda\|_{\ell^p}.
$$
\item[(ii)] There is an $r$-lattice $\{a_ k\}$ in $\Bn$ such that, for any $f\in A^p_{\a}$, there is a sequence $\lambda=\{\lambda_k\}\in \ell^p$
with
$$
f(z)=\sum_{k=1}^{\infty} \lambda_k \frac{(1-|a_k|^2)^{b-(n+1+\a)/p}}{(1-\langle z,a_ k\rangle)^{b}}.
$$
and
$$
\|\lambda\|_{\ell^p}\lesssim \|f\|_{p,\a}.
$$
\end{itemize}
\end{otherth}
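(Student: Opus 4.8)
The plan is to prove the two halves separately: the synthesis estimate (i), that the atomic series defines an element of $A^p_\a$, and the analysis estimate (ii), that every element admits such a series. The only external tools I would use are the Forelli--Rudin integral estimate of Lemma \ref{LA} and the lattice decomposition of Lemma \ref{covering}. Throughout, write $b_k(z)=(1-|a_k|^2)^{b-(n+1+\a)/p}\,(1-\langle z,a_k\rangle)^{-b}$ for the normalized kernel atoms.

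For part (i) I would split according to the size of $p$. When $0<p\le 1$, the $p$-subadditivity $\left|\sum_k\l_k b_k\right|^p\le\sum_k|\l_k|^p|b_k|^p$ reduces the matter to a single-atom estimate: writing $dv_\a(z)=c_\a(1-|z|^2)^\a\,dv(z)$,
$$\inb|b_k(z)|^p\,dv_\a(z)=(1-|a_k|^2)^{pb-(n+1+\a)}\inb\frac{(1-|z|^2)^\a\,dv(z)}{|1-\langle z,a_k\rangle|^{pb}}\lesssim 1,$$
the last step being Lemma \ref{LA} with $t=\a$ and $s=pb-(n+1+\a)>0$, a positivity that is exactly the hypothesis $b>(n+1+\a)/p$. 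Summing gives $\|f\|_{p,\a}^p\lesssim\|\l\|_{\ell^p}^p$, and notably separatedness is not needed here. When $p>1$ I would instead bound $|f(z)|$ by the series of absolute values, apply H\"older in $k$ after splitting the exponent $b$ of the kernel into two pieces, and control the resulting factors by a \emph{discrete} Forelli--Rudin estimate $\sum_k(1-|a_k|^2)^{t}|1-\langle z,a_k\rangle|^{-(n+1+t+s)}\lesssim(1-|z|^2)^{-s}$ (which follows from Lemma \ref{LA} together with separatedness and $v_\a(D(a_k,r))\asymp(1-|a_k|^2)^{n+1+\a}$), and by Lemma \ref{LA} once more for the remaining integral in $z$; here the full threshold $b>n\max(1,1/p)+(1+\a)/p$ is used.

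For part (ii) I would fix an $r$-lattice $\{a_k\}$ from Lemma \ref{covering}, disjointify the covering into Borel sets $E_k$ with $E_k\subset D_k$ and $\Bn=\bigcup_k E_k$, and start from a standard reproducing formula
$$f(z)=c_b\inb\frac{(1-|w|^2)^{b-n-1}\,f(w)}{(1-\langle z,w\rangle)^{b}}\,dv(w),$$
valid on $A^p_\a$ once $b$ is large enough. Replacing the integral over each $E_k$ by its integrand at the center $a_k$ defines a discretization operator $S_r f=\sum_k\l_k b_k$ with $\l_k\asymp f(a_k)(1-|a_k|^2)^{(n+1+\a)/p}$, using $v(E_k)\asymp(1-|a_k|^2)^{n+1}$. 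The coefficient bound $\|\l\|_{\ell^p}\lesssim\|f\|_{p,\a}$ then follows from $\sum_k|f(a_k)|^p(1-|a_k|^2)^{n+1+\a}\lesssim\|f\|_{p,\a}^p$, which is the easy half of the lattice norm comparison and comes from the sub--mean--value property, the volume estimate, and the bounded overlap in Lemma \ref{covering}. It remains to check that $S_r$ genuinely recovers $f$: I would show that the discretization error $\id-S_r$ has operator norm strictly less than $1$ on $A^p_\a$ when $r$ is small, so that $S_r$ is invertible; then replacing $f$ by $S_r^{-1}f$ in the coefficient formula represents $f=S_r(S_r^{-1}f)$ as an atomic series whose $\ell^p$ coefficient norm is controlled by $\|S_r^{-1}f\|_{p,\a}\lesssim\|f\|_{p,\a}$.

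The hard part is exactly this last estimate, the smallness of $\|\id-S_r\|$. Its kernel is $\sum_k\int_{E_k}\bigl[(1-\langle z,w\rangle)^{-b}-(1-\langle z,a_k\rangle)^{-b}\bigr](1-|w|^2)^{b-n-1}\,dv(w)$, and one must show it is small by exploiting that, for $w$ in the small Bergman ball $D_k$, one has $|1-\langle z,w\rangle|\asymp|1-\langle z,a_k\rangle|$ together with a mean--value bound on the difference of the two kernels that is proportional to the Bergman diameter of $D_k$, a quantity tending to $0$ with $r$. Feeding these pointwise bounds into the Forelli--Rudin estimates of Lemma \ref{LA} should show that the error operator is bounded on $A^p_\a$ with norm $O(r)$ times a fixed Schur--type constant, hence $<1$ for $r$ small. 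This is the step where the Bergman geometry and the integral estimates have to be combined most carefully.
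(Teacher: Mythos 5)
This theorem is not proved in the paper at all: it is quoted as background (Theorem C), attributed to Coifman--Rochberg \cite{cr} and to Chapter 2 of \cite{zhu2}, the only original content being the remark that in part (i) separation of $\{a_k\}$ suffices in place of the lattice property. So there is no in-paper proof to compare against; what you have written is, in outline, precisely the standard proof from the literature that the authors chose to cite rather than reproduce: $p$-subadditivity plus the single-atom Forelli--Rudin estimate for $0<p\le 1$, H\"older plus a discrete Forelli--Rudin estimate for $p>1$ (this is where separation, and the larger threshold $b>n+(1+\a)/p$, enter --- consistent with the paper's remark), and, for part (ii), discretization of the reproducing formula $f=P_{b-n-1}f$ over the disjointified lattice cells $E_k$, followed by the Neumann-series argument showing $\|\id-S_r\|<1$ for small $r$. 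Your coefficient identification $\lambda_k\asymp f(a_k)(1-|a_k|^2)^{(n+1+\a)/p}$ and the easy half of the lattice norm comparison are also exactly as in \cite{zhu2}. So the route is sound and is the expected one.

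One concrete error needs fixing: the ``discrete Forelli--Rudin estimate'' as you state it,
\begin{displaymath}
\sum_k(1-|a_k|^2)^{t}\,|1-\langle z,a_k\rangle|^{-(n+1+t+s)}\lesssim(1-|z|^2)^{-s},
\end{displaymath}
is false. Take $z=a_1$: the single term $k=1$ already equals $(1-|a_1|^2)^{-(n+1+s)}$, which dwarfs the claimed bound $(1-|a_1|^2)^{-s}$. The point is that passing from the sum to an integral via $v\bigl(D(a_k,\rho)\bigr)\asymp(1-|a_k|^2)^{n+1}$ costs a factor $(1-|a_k|^2)^{-(n+1)}$ per term, so the correct statement (for a separated sequence, $t>-1$, $s>0$) is
\begin{displaymath}
\sum_k(1-|a_k|^2)^{n+1+t}\,|1-\langle z,a_k\rangle|^{-(n+1+t+s)}\lesssim(1-|z|^2)^{-s},
\end{displaymath}
i.e.\ the numerator exponent must exceed $n$, not $-1$. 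With this corrected version your H\"older argument for $p>1$ does close up: splitting $b=b_1+b_2$ and $b-(n+1+\a)/p=c_1+c_2$, the dual-exponent sum needs $c_2p'>n$, which is available exactly because $b>n+(1+\a)/p$ forces $\bigl(b-(n+1+\a)/p\bigr)p'>n$; the remaining integral in $z$ is then handled by Lemma \ref{LA} as you say. The same normalization care is needed in the Schur-type estimate for $\id-S_r$ in part (ii). These are repairs of bookkeeping, not of the idea, but as written the key displayed inequality would not survive refereeing.
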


In the proof given in \cite{zhu2}, part (i) requires that
the sequence $\{a_k\}$ is an $r$-lattice for some $r\in(0,1]$, but it is well known that only the separation of the sequence $\{a_ k\}$ is needed.

\section{Weak factorizations and Hankel forms}
\label{hankel forms}

It is well known to specialists, but difficult to find in the literature,  that the obtention of weak factorizations is equivalent to estimates for small Hankel operators or Hankel forms (in our case, estimates with loss).
The equivalence between boundedness of the bilinear Hankel form $T_b^{\a}$ and
weak factorization can be formulated as the following result. Since this is the basis for our obtention of the weak factorization for Bergman spaces, for completeness,  we offer the proof here of the implication that gives the factorization.

\begin{prop}\label{equiv}
Let $1<q<\infty$ and $\a,\beta>-1$.
Let $p_1,p_2$ and $\a_1,\a_2$ satisfy (\ref{pa-ineq}) and (\ref{conjugate}),
and let $q'$ and $\beta'$ satisfy (\ref{conjugate-q}).
The following are equivalent:
\begin{itemize}
\item[(i)]
$A^q_{\beta}\subset A^{p_1}_{\a_1}\odot A^{p_2}_{\a_2}\,$ with $\|f\|_{A^{p_1}_{\a_1}\odot A^{p_2}_{\a_2}}\lesssim \|f\|_{q,\beta}\,$ for $f\in A^q_{\beta}$.
\item[(ii)] For any analytic function $b$, if
$T_b^{\a}$ is bounded on $A^{p_1}_{\a_1}\times A^{p_2}_{\a_2}$, then
 $b\in A^{q'}_{\beta'}$ with $\|b\|_{q',\beta'}\lesssim \|T_ b^{\a}\|$.
\end{itemize}
\end{prop}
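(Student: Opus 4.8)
The plan is to prove the equivalence by a duality argument, regarding $X:=A^{p_1}_{\a_1}\odot A^{p_2}_{\a_2}$ as a Banach space equipped with its weak factorization norm and identifying its dual with the bounded Hankel forms $T_b^{\a}$. The direction (i)$\Rightarrow$(ii) is the easy one; the substantive implication, which is what yields the factorization, is (ii)$\Rightarrow$(i). The bridge used throughout is that a single product $fg$ with $f\in A^{p_1}_{\a_1}$ and $g\in A^{p_2}_{\a_2}$ satisfies $\|fg\|_{q,\beta}\lesssim\|f\|_{p_1,\a_1}\|g\|_{p_2,\a_2}$ by H\"older's inequality and \eqref{conjugate}, so that $X\subset A^q_{\beta}$ and the pairing $\langle\,\cdot\,,b\rangle_{\a}$ is meaningful on $X$.

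For (i)$\Rightarrow$(ii) I would argue as follows. Assume $T_b^{\a}$ is bounded. Given $f\in A^q_{\beta}$, inclusion (i) lets me choose a factorization $f=\sum_k\p_k\psi_k$ with $\sum_k\|\p_k\|_{p_1,\a_1}\|\psi_k\|_{p_2,\a_2}\le 2\|f\|_{A^{p_1}_{\a_1}\odot A^{p_2}_{\a_2}}\lesssim\|f\|_{q,\beta}$. Since $\langle f,b\rangle_{\a}=\sum_k T_b^{\a}(\p_k,\psi_k)$, I get $|\langle f,b\rangle_{\a}|\le\|T_b^{\a}\|\sum_k\|\p_k\|_{p_1,\a_1}\|\psi_k\|_{p_2,\a_2}\lesssim\|T_b^{\a}\|\,\|f\|_{q,\beta}$. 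Hence $f\mapsto\langle f,b\rangle_{\a}$ is a bounded functional on $A^q_{\beta}$ of norm $\lesssim\|T_b^{\a}\|$, and Theorem \ref{duality} together with \eqref{conjugate-q} gives $b\in A^{q'}_{\beta'}$ with $\|b\|_{q',\beta'}\lesssim\|T_b^{\a}\|$.

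For (ii)$\Rightarrow$(i) the plan is to compute $\|f\|_{A^{p_1}_{\a_1}\odot A^{p_2}_{\a_2}}$ by Hahn--Banach as $\sup\{|\Lambda(f)|:\Lambda\in X^*,\ \|\Lambda\|\le 1\}$. Each $\Lambda\in X^*$ corresponds to a bounded bilinear form $B(f,g)=\Lambda(fg)$ on $A^{p_1}_{\a_1}\times A^{p_2}_{\a_2}$ depending only on the product $fg$, with $\|B\|=\|\Lambda\|$ (one inequality from $\|fg\|_X\le\|f\|_{p_1,\a_1}\|g\|_{p_2,\a_2}$, the other from the definition of the infimum norm). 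I then represent each such form by an analytic symbol: setting $b(w)=\overline{\Lambda(K_w^{\a})}$ with $K_w^{\a}(z)=(1-\langle z,w\rangle)^{-(n+1+\a)}$, which lies in $X$ for fixed $w$ since it is bounded, one checks via the reproducing identity $\int_{\Bn}p(z)K_z^{\a}(\,\cdot\,)\,dv_{\a}(z)=p$ for polynomials $p$ that $\Lambda(p)=\langle p,b\rangle_{\a}$, so $B=T_b^{\a}$ on the dense set of polynomials. Thus $T_b^{\a}$ is bounded with $\|T_b^{\a}\|=\|\Lambda\|$, and hypothesis (ii) yields $b\in A^{q'}_{\beta'}$ with $\|b\|_{q',\beta'}\lesssim\|\Lambda\|$. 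Finally, by the H\"older pairing of Theorem \ref{duality}, $|\Lambda(f)|=|\langle f,b\rangle_{\a}|\le\|f\|_{q,\beta}\|b\|_{q',\beta'}\lesssim\|\Lambda\|\,\|f\|_{q,\beta}$, and taking the supremum over $\|\Lambda\|\le1$ gives (i).

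The main obstacle is precisely the symbol-representation step in (ii)$\Rightarrow$(i): showing that every abstract bounded functional on the weakly factored space, equivalently every bounded product-dependent bilinear form, is a genuine Hankel form $T_b^{\a}$ with an \emph{analytic} symbol $b$, so that (ii) may be applied. The delicate point is justifying $\Lambda(p)=\langle p,b\rangle_{\a}$: either one passes $\Lambda$ through the reproducing integral, interpreting it as an $X$-valued integral and controlling $\|K_z^{\a}\|_X$ through a suitable splitting of the kernel and Lemma \ref{LA}, or one bypasses this by defining $b$ through its Taylor coefficients $c_\gamma=\overline{\Lambda(z^\gamma)}/\|z^\gamma\|_{2,\a}^2$, using the orthogonality of monomials under $\langle\,\cdot\,,\cdot\,\rangle_{\a}$ and verifying that the resulting power series converges on $\Bn$. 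Once this representation is in hand, the remainder is a routine combination of Hahn--Banach, H\"older's inequality, and Theorem \ref{duality}.
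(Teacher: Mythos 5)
Your plan for (ii)$\Rightarrow$(i) — compute $\|f\|_{X}$, where $X:=A^{p_1}_{\a_1}\odot A^{p_2}_{\a_2}$, as $\sup_{\|\Lambda\|\le 1}|\Lambda(f)|$ via Hahn--Banach, represent each $\Lambda\in X^*$ by a Hankel form $T_b^{\a}$ with \emph{analytic} symbol $b$, then apply hypothesis (ii) and Theorem \ref{duality} — is exactly the paper's skeleton, and your (i)$\Rightarrow$(ii) argument (which the paper omits, citing the literature) is fine. But the proof is not complete: the step you yourself flag as ``the main obstacle,'' namely that every $\Lambda\in X^*$ agrees on polynomials with $\langle\,\cdot\,,b\rangle_{\a}$ for some analytic $b$, is only sketched, and the first of your two proposed routes genuinely fails. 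If $b(w)=\overline{\Lambda(K^{\a}_w)}$ and you pass $\Lambda$ through the reproducing integral as an $X$-valued Bochner integral, you need $\int_{\Bn}|p(w)|\,\|K^{\a}_w\|_{X}\,dv_{\a}(w)<\infty$. Now $\|K^{\a}_w\|_{X}\asymp(1-|w|^2)^{(n+1+\beta)/q-(n+1+\a)}$: the upper bound follows from splitting the kernel into two factors and Lemma \ref{LA}, but the lower bound is forced by H\"older's inequality, which gives $\|h\|_{q,\beta}\lesssim\|h\|_{X}$ for every $h\in X$, together with the standard two-sided estimate for $\|K^{\a}_w\|_{q,\beta}$. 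Hence the integrand is comparable (already for $p\equiv 1$) to $(1-|w|^2)^{(n+1+\beta)/q-(n+1)}$, which is integrable only when $q<(n+1+\beta)/n$; since $q$ ranges over all of $(1,\infty)$, this route breaks down in general, and no cleverer splitting of $K^{\a}_w$ can repair it. Your second route (Taylor coefficients $c_\gamma=\overline{\Lambda(z^\gamma)}/\|z^\gamma\|_{2,\a}^2$) can be made to work, but the convergence of the series on all of $\Bn$ is not automatic: the crude bound $|\Lambda(z^\gamma)|\lesssim\|\Lambda\|$ only yields convergence for $|z|<1/\sqrt{n}$, and reaching the whole ball requires Stirling-type asymptotics comparing $\|z^\gamma\|_{\infty}$, $\|z^\gamma\|_{p_1,\a_1}$ and $\|z^\gamma\|_{2,\a}$, which you do not carry out.

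The missing idea is a one-line trick that the paper uses precisely to bypass all of this. Since $\varphi=1\cdot\varphi$, every $\Lambda\in X^*$ satisfies $|\Lambda(\varphi)|\le\|\Lambda\|\,\|1\|_{p_1,\a_1}\,\|\varphi\|_{p_2,\a_2}$, so $\Lambda$ restricts to a bounded linear functional on $A^{p_2}_{\a_2}$. Theorem \ref{duality} (with exponents $p_2'$, $\a_2'$ determined by $1/p_2+1/p_2'=1$ and $\a_2/p_2+\a_2'/p_2'=\a$) then hands you an analytic $b\in A^{p_2'}_{\a_2'}$ with $\Lambda(\varphi)=\langle\varphi,b\rangle_{\a}$ for all $\varphi\in A^{p_2}_{\a_2}$, in particular for all polynomials; consequently $T_b^{\a}(g,h)=\langle gh,b\rangle_{\a}=\Lambda(gh)$ for polynomials $g,h$, so $T_b^{\a}$ is bounded with $\|T_b^{\a}\|\le\|\Lambda\|$ and hypothesis (ii) applies, after which the rest of your argument goes through verbatim. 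One final point of rigor: writing $|\Lambda(f)|=|\langle f,b\rangle_{\a}|$ for a general $f\in A^q_{\beta}$ presupposes $f\in X$, which is part of what is being proved; the clean way is to run the estimate on polynomials, obtaining $\|p\|_{X}\lesssim\|p\|_{q,\beta}$, and then pass to arbitrary $f\in A^q_{\beta}$ using the density of polynomials in $A^q_{\beta}$ and the continuity of the natural map $X\to A^q_{\beta}$ given by H\"older's inequality.
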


\begin{proof}
We will prove (ii) implies (i). The other implication is easier, and the interested reader can follow the argument in  \cite[Corollary 1.2]{arsw} for a proof. By the atomic decomposition in Theorem \ref{AtomicD}, we have the inclusion $A^q_{\beta}\subset A^{p_1}_{\a_1}\odot A^{p_2}_{\a_2}.$ In order to have the corresponding estimate for the norms, we will show that, for any bounded linear functional $F$ on $A^{p_1}_{\a_1}\odot A^{p_2}_{\a_2}$, there is a unique function $b_ F\in A^{q'}_{\beta'}$ with $\|b_ F\|_{q',\beta'}\lesssim \|F\|$ such that $F(f)=\langle f,b_ F \rangle _{\alpha}$ for $f\in A^q_{\beta}$. This would give
\[
\|f\|_{A^{p_1}_{\a_1}\odot A^{p_2}_{\a_2}} =\sup_{\|F\|=1} |F(f)| \le \sup_{\|F\|=1} \|b_ F\|_{q',\beta'}\cdot \|f\|_{q,\beta} \lesssim \|f\|_{q,\beta}.
\]
Thus, suppose $F\in (A^{p_1}_{\a_1}\odot A^{p_2}_{\a_2})^*$
with norm $\|F\|$.
Then for all $\p\in A^{p_2}_{\a_2}$ we have
$$
|F(\p)|=|F(1\cdot\p)|\le \|F\|\cdot\|1\|_{p_1,\a_1}\cdot\|\p\|_{p_2,\a_2}
=\|F\|\cdot\|\p\|_{p_2,\a_2}.
$$
Hence $F\in (A^{p_2}_{\a_2})^*$, and so, by Theorem \ref{duality},
there is an unique function $b=b_ F\in A^{p_2'}_{\a_2'}$ such that
$F(\p)=\langle \p,b\rangle_{\a}$ for all $\p\in A^{p_2}_{\a_2}$,
where $p_2'$ and $\a_2'$ satisfy
$$
\frac{1}{p_2}+\frac{1}{p_2'}=1,\qquad
\frac{\a_2}{p_2}+\frac{\a_2'}{p_2'}=\a.
$$
Now, for polynomials $g$ and $h$ we have
\begin{displaymath}
\begin{split}
|T_b^{\a}(g,h)|&=|\langle gh,b\rangle_{\a}|
=|F(gh)|
\\
&
\le \|F\|\cdot\|g h\|_{A^{p_1}_{\a_1}\odot A^{p_2}_{\a_2}}
\\
&\le \|F\|\cdot\|g\|_{p_1,\a_1}\cdot\|h\|_{p_2,\a_2},
\end{split}
\end{displaymath}
which shows that $T_b^{\a}$ is bounded
on $A^{p_1}_{\a_1}\times A^{p_2}_{\a_2}$ with $\|T_b^{\a}\|\le \|F\|$. Therefore,
 we know that $b\in A^{q'}_{\beta '}$ with $\|b\|_{q',\beta'}\lesssim \|T_ b^{\a}\|\lesssim \|F\|$. Hence $\Lambda(f)=\langle f,b \rangle_{\alpha}$ defines a bounded linear form on $A^q_\beta$, and coincides with $F$ on polynomials. Thus, for $f\in A^q_{\beta}$, we have $F(f)=\Lambda(f)=\langle f,b \rangle _{\alpha}$.
The proof is complete.
\end{proof}

\section{Proof of Theorem~\ref{hankel}}
\label{proof-thm1}

In this section we prove Theorem~\ref{hankel}, from which
Theorem~\ref{hankel-form} follows. Before that,
for  $s\ge 0$ and $\a>-1$,
let  $R^{\a,s}$ denote the unique continuous linear operator on $H(\Bn)$ satisfying
\begin{displaymath}
R^{\alpha,s}\left (\frac{1}{(1-\langle z,w\rangle )^{n+1+\alpha}}\right )=\frac{1}{(1-\langle z,w\rangle )^{n+1+\alpha+s}}
\end{displaymath}
for all $w\in \Bn$. If $f\in A^1_{\alpha}$, then $R^{\alpha,s}f$ is given by the following integral expression
\begin{equation}\label{def-q}
R^{\a,s}f(z)=\int_{\Bn}\frac{f(w)}{(1-\langle z,w\rangle)^{n+1+\a+s}}\,dv_{\a}(w).
\end{equation}
More properties of the ``differential type" operators $R^{\alpha,s}$ can be found in \cite[Section 1.4]{zhu2}. Now we are ready to prove Theorem~\ref{hankel}.

\begin{proof}[\textbf{Proof of Theorem~\ref{hankel}}]
As we noticed before, we just need to prove that
$S^{\a}_f:\, A^{p_1}_{\a_1}\to A^{p_2}_{\a_2}$ is bounded
if and only if $f\in A^q_{\beta}$.
\par Suppose first that $f\in A^q_{\beta}$. We need to show $S^{\a}_f:A^{p_1}_{\a_1}\rightarrow A^{p_2}_{\a_2}$ is bounded. Let $g\in A^{p_1}_{\a_1}$. If $p_ 2 > 1$ then $P_{\alpha}:L^{p_ 2}(\Bn,dv_{\alpha_ 2})\rightarrow A^{p_ 2}_{\a_ 2}$ is bounded, and then from H\"{o}lder's inequality the result follows. Indeed,
\begin{displaymath}
\|S^{\a}_ f g\|_{p_ 2,\a_ 2}=\|P_{\a}(f\bar{g})\|_{p_ 2,\a_ 2} \le C \|fg\|_{p_ 2,\a_ 2}\le C \|f\|_{q,{\beta}}\cdot \|g\|_{p_ 1,\a_ 1}
\end{displaymath}
which shows that $S^{\a}_f:\, A^{p_1}_{\a_1}\to A^{p_2}_{\a_2}$ is bounded with
$$\|S^{\a}_f\|\lesssim \|f\|_{q,{\beta}}.$$

Conversely, suppose $S^{\a}_f:\, A^{p_1}_{\a_1}\to A^{p_2}_{\a_2}$ is bounded,
we are going to show that $f\in A^q_{\beta}$.
We begin with using an argument of Luecking (see, e.g., \cite{l2}).
Let $r_k(t)$ be a sequence of Rademacher functions
(see \cite[Appendix A]{Du}).
Let $b$ be large enough so that
\begin{equation}\label{b1}
b>n+\frac{1+\a_1}{p_1}.
\end{equation}
Fix any $r>0$, and let $\{a_k\}$ be an $r$-lattice and $\{D_k\}$
be the associated sets in Lemma~\ref{covering}.
By Theorem \ref{AtomicD}, we know that,
for any sequence of real numbers $\lambda=\{\lambda_k\}\in \ell^{p_1}$, the function
$$
g_t(z)=\sum_{k=1}^{\infty}\lambda_k r_k(t)
\frac{(1-|a_k|^2)^{b-(n+1+\alpha_1)/p_1}}{(1-\langle z, a_k\rangle)^b}
$$
belongs to $A^{p_1}_{\alpha_1}$ with $\|g_t\|_{p_1,\alpha_1}\lesssim \|\lambda\|_{\ell^{p_1}}$ for almost every $t$ in $(0,1)$.
Denote by
$$
g_k(z)=\frac{(1-|a_k|^2)^{b-(n+1+\alpha_1)/p_1}}{(1-\langle z,a_k\rangle)^b}.
$$
Since $S^{\a}_f:\, A^{p_1}_{\a_1}\to A^{p_2}_{\a_2}$ is bounded,
we get that
\begin{eqnarray*}
\|S^{\a}_fg_t\|_{p_2,\alpha_2}^{p_2}
&=&\int_{\Bn}\left|\sum_{k=1}^{\infty}\lambda_kr_k(t)S^{\a}_fg_k(z)\right|^{p_2}\,dv_{\alpha_2}(z)\\
\\
&\lesssim& \|S^{\a}_f\|^{p_2}\cdot \|g_t\|_{p_1,\alpha_1}^{p_2}
\lesssim \|S^{\a}_f\|^{p_2} \cdot \|\lambda\|_{\ell^{p_1}}^{p_2}
\end{eqnarray*}
for almost every $t$ in $(0,1)$.
Integrating both sides with respect to $t$ from $0$ to $1$,
and using Fubini's Theorem and Khinchine's inequality (see \cite[p.12]{wo}),
we get
\begin{equation}\label{lp}
\int_{\Bn}\left(\sum_{k=1}^{\infty}|\lambda_k|^2
|S^{\a}_fg_k(z)|^2\right)^{p_2/2}\!\!\!\!\!\!dv_{\alpha_2}(z)
\lesssim \|S^{\a}_f\|^{p_2} \cdot \|\lambda\|_{\ell^{p_1}}^{p_2}.
\end{equation}
Now we estimate
\begin{equation}\label{sf0}
\sum_{k=1}^{\infty}\!|\lambda_k|^{p_2}\!\!
\int_{\!\tilde D_k} \!\! |S^{\a}_fg_k(z)|^{p_2} dv_{\alpha_2}(z)
=\!\!\int_{\Bn}\!\!\!\left(\sum_{k=1}^{\infty}\!|\lambda_k|^{p_2}
|S^{\a}_fg_k(z)|^{p_2}\chi_{\tilde D_k}(z)\!\right)^{\frac{2}{p_2}\cdot\frac{p_2}{2}}
\!\!\!\!\!\!\!\!\!\!dv_{\alpha_2}(z).
\end{equation}
If $p_2\ge 2$, then $2/p_2\le 1$, and from (\ref{sf0}) we have
\begin{eqnarray*}
&~&\sum_{k=1}^{\infty}|\lambda_k|^{p_2}
\int_{\tilde D_k}|S^{\a}_fg_k(z)|^{p_2}\,dv_{\alpha_2}(z)\\
&~&\qquad
 \le \int_{\Bn}\left(\sum_{k=1}^{\infty}|\lambda_k|^{2}
|S^{\a}_fg_k(z)|^{2}\chi_{\tilde D_k}(z)\right)^{p_2/2}\!\!\!\!\! dv_{\alpha_2}(z)\\
&~&\qquad \le \int_{\Bn}\left(\sum_{k=1}^{\infty}|\lambda_k|^{2}
|S^{\a}_fg_k(z)|^{2}\right)^{p_2/2}
\!\!\!\!\!dv_{\alpha_2}(z).
\end{eqnarray*}
If $1<p_2<2$, then $2/p_2>1$, from (\ref{sf0}), by H\"older's inequality we get
\begin{eqnarray*}
&~&\sum_{k=1}^{\infty}|\lambda_k|^{p_2}
\int_{\tilde D_k}|S^{\a}_fg_k(z)|^{p_2}\,dv_{\alpha_2}(z)\\
&~&\qquad \le
\int_{\Bn}\left(\sum_{k=1}^{\infty}|\lambda_k|^2
|S^{\a}_fg_k(z)|^2\right)^{p_2/2}
\left(\sum_{k=1}^{\infty}\chi_{\tilde D_k}(z)\right)^{1-p_2/2} \!\!\!\! dv_{\alpha_2}(z)\\
&~&\qquad \le N^{1-p_2/2}
\int_{\Bn}\left(\sum_{k=1}^{\infty}|\lambda_k|^2
|S^{\a}_fg_k(z)|^2\right)^{p_2/2} \!\!\!\! dv_{\alpha_2}(z),
\end{eqnarray*}
since each $z\in\Bn$ belongs to at most $N$ of the sets $\tilde D_k$.
Combining the above two inequalities, and applying (\ref{lp}) we have
\begin{eqnarray*}
&~&\sum_{k=1}^{\infty}|\lambda_k|^{p_2}
\int_{\tilde D_k}|S^{\a}_fg_k(z)|^{p_2}\,dv_{\alpha_2}(z)\\
&~&\qquad\le\min\{1,N^{1-p_2/2}\}
\int_{\Bn}\left(\sum_{k=1}^{\infty}|\lambda_k|^2
|S^{\a}_fg_k(z)|^2\right)^{p_2/2} \!\!\!\! dv_{\alpha_2}(z)\\
&~&\qquad\lesssim \|S^{\a}_f\|^{p_2} \cdot \|\lambda\|_{\ell^{p_1}}^{p_2}.
\end{eqnarray*}
By subharmonicity we know that,
$$
|S^{\a}_fg_k(a_k)|^{p_2}
\lesssim\frac{1}{(1-|a_k|^2)^{n+1+\alpha_2}}
\int_{\tilde D_k}|S^{\a}_fg_k(z)|^{p_2}\,dv_{\alpha_2}(z).
$$
From this we obtain
\begin{equation}\label{sf1}
\sum_{k=1}^{\infty}|\lambda_k|^{p_2}(1-|a_k|^2)^{n+1+\alpha_2}\,
\big |S^{\a}_fg_k(a_k)\big |^{p_2}
\lesssim \|S^{\a}_f\|^{p_2}\cdot \|\lambda\|_{\ell^{p_1}}^{p_2}.
\end{equation}
Let
$
R^{\a,b}$ be the integral operator defined in \eqref{def-q}.
Then
\begin{eqnarray*}
S^{\a}_fg_k(a_k)
&=&\int_{\Bn}\frac{f(w)\overline{g_k(w)}}{(1-\langle a_k,w\rangle)^{n+1+\a}}\,dv_{\a}(w)\\
&=&\int_{\Bn}\frac{f(w)(1-|a_k|^2)^{b-(n+1+\a_1)/p_1}}
{(1-\langle a_k,w\rangle)^{n+1+\a}(1-\langle a_k,w\rangle)^{b}}\,dv_{\a}(w)\\
&=&(1-|a_k|^2)^{b-(n+1+\a_1)/p_1}
\int_{\Bn}\frac{f(w)}{(1-\langle a_k,w\rangle)^{n+1+\a+b}}\,dv_{\a}(w)\\
&=&(1-|a_k|^2)^{b-(n+1+\a_1)/p_1}R^{\a,b}f(a_k).
\end{eqnarray*}
Thus (\ref{sf1}) becomes
\begin{equation}\label{sf2}
\sum_{k=1}^{\infty}\!|\lambda_k|^{p_2}(1-|a_k|^2)^{(n+1+\alpha_2)+[b-(n+1+\a_1)/p_1]p_2}
|R^{\a,b} \!f(a_k)|^{p_2}
\lesssim \|S^{\a}_f\|^{p_2}\cdot \|\lambda \|_{\ell^{p_1}}^{p_2}.
\end{equation}
Since
$$
(n+1+\alpha_2)+\left(b-\frac{n+1+\a_1}{p_1}\right)p_2
=\left(b+\frac{n+1+\beta}{q}\right)p_2,
$$
the equation (\ref{sf2}) is the same as
\begin{equation}\label{sf3}
\sum_{k=1}^{\infty}|\lambda_k|^{p_2}
\left[(1-|a_k|^2)^{b+(n+1+\beta)/q}|R^{\a,b} \!f(a_k)|\right]^{p_2}
\lesssim \|S^{\a}_f\|^{p_2}\cdot \|\lambda\|_{\ell^{p_1}}^{p_2}.
\end{equation}
Since $\{\lambda_k\}$ was an arbitrary sequence in $\ell^{p_1}$,
we know that $\{\lambda_k^{p_2}\}$ is an arbitrary sequence in $\ell^{p_1/p_2}$.
Since the conjugate exponent of $p_ 1/p_ 2$ is $(p_1/p_2)'=p_1/(p_1-p_2)$, by duality we obtain that
$$
\left\{(1-|a_k|^2)^{b+(n+1+\beta)/q}|R^{\a,b} f(a_k)|\right\}\in \ell^{p_1p_2/(p_1-p_2)}=\ell^q,
$$
and
\begin{equation}\label{sf4}
\sum_{k=1}^{\infty}(1-|a_k|^2)^{bq+(n+1+\beta)}|R^{\a,b} \!f(a_k)|^q
\lesssim \|S^{\a}_f\|^q.
\end{equation}
This is the discrete version of what we want. Now, we will deduce that $f\in A^q_{\beta}$ with $\|f\|_{q,\beta}\lesssim \|S^{\a}_f\|$ using
 duality and the atomic decomposition for Bergman spaces. Indeed,
choose $\beta'=q'(\a-\beta/q)$ (which means $\a=\beta/q+\beta'/q'$).
 Note that condition (\ref{alpha}) implies that $1+\a>(1+\beta)/q$, and this
guarantees that $\beta'>-1$. Hence, by the duality result in Theorem \ref{duality},
\begin{equation}\label{Eq-D9}
\|f\|_{q,\beta}
\asymp\sup_{\|h\|_{q'\!,\beta'}=1}|\langle h,f \rangle_{\a}|.
\end{equation}
Observe that
$$
n+1+\a+b>n+\frac{1+\beta'}{q'}.
$$
Then, we can apply \eqref{sf4} with the $r$-lattice $\{a_ k\}$ for which the atomic decomposition in Theorem \ref{AtomicD} for $A^{q'}_{\beta'}$ holds.
That is, for any $h\in A^{q'}_{\beta'}$, there exists a sequence $\mu=\{\mu_k\}\in \ell^{q'}$
with $\|\mu\|_{\ell^{q'}}\lesssim\|h\|_{q',\beta'}$ such that
$$
h(z)=\sum_{k=1}^{\infty}\mu_k
\frac{(1-|a_k|^2)^{n+1+\a+b-(n+1+\beta')/q'}}{(1-\langle z, a_k\rangle)^{n+1+\a+b}}.
$$
Since
$$
n+1+\a+b-(n+1+\beta')/q'
=b+(n+1+\beta)/q,
$$
then \eqref{Eq-D9}, H\"older's inequality and \eqref{sf4} gives
\begin{eqnarray*}
\|f\|_{q,\beta}
&\asymp&\sup_{\|h\|_{q'\!,\beta'}=1}\left| \, \sum_{k=1}^{\infty}\mu_k
(1-|a_k|^2)^{b+(n+1+\beta)/q}\,\overline{R^{\a,b} f(a_k)}\,\right|
\\
&\le&\sup_{\|h\|_{q'\!,\beta'}=1} \big \|\mu \big \|_{\ell^{q'}}
\left[\sum_{k=1}^{\infty}(1-|a_k|^2)^{bq+(n+1+\beta)}|R^{\a,b} f(a_k)|^q\right]^{1/q}
\\
&\lesssim & \|S^{\a}_f\|.
\end{eqnarray*}
Hence $f\in A^q_{\beta}$ with $$
\|f\|_{q,\beta}\lesssim \|S^{\a}_f\|.
$$
This finishes the proof.

\end{proof}

\section{Further results}\label{Com}
\subsection{Compactness}
 Under the assumptions of Theorem \ref{hankel}, actually one has that the small Hankel operator $h^{\a}_{\overline{f}}:A^{p_ 1}_{\a_ 1}\rightarrow \overline{A^{p_ 2}_{\a_ 2}}$ is bounded if and only if it is compact. This is from a general result of Banach space theory.
It is known that, for $0< p_2<p_1<\infty$, every bounded operator
from $\ell^{p_1}$ to $\ell^{p_2}$ is compact
(see, for example Theorem I.2.7, p.31 in \cite{lt}).
Since the weighted Bergman space $A^p_{\a}$ is isomorphic to $\ell^{p}$
(see, Theorem 11, p.89 in \cite{wo}, note that the same proof there works
for weighted Bergman spaces on the unit ball $\Bn$),
we get directly the above result.

\subsection{Small Hankel operators with the same weights}

Concerning the boundedness of the small Hankel operator $h^{\alpha}_{\overline{f}}:A^{p_ 1}_{\a}\rightarrow \overline{A^{p_ 2}_{\a}}$ (the case when all the weights are the same) for all possible choices of $0<p_ 1,p_ 2<\infty$, we mention here that the case $p_ 1=p_ 2>1$ is by now classical (see \cite{JPR}, \cite{Zhu-TAMS} and \cite{bl}), and in this case the boundedness is equivalent to the symbol $f$ being in the Bloch space $\mathcal{B}$, that consists of those holomorphic functions $f$ on $\Bn$ with
$$\|f\|_{\mathcal{B}}=|f(0)|+\sup_{z\in \Bn} (1-|z|^2) |Rf(z)|<\infty.$$
Here, $Rf$ denotes the radial derivative of $f$, that is, $$Rf(z)= \sum_{k=1}^{n} z_ k \frac{\partial f}{\partial z_ k} (z),\qquad z=(z_ 1,\dots,z_ n)\in \Bn.$$
The Bloch space also admits an equivalent norm in terms of the invariant gradient $\widetilde{\nabla} f(z):=\nabla (f\circ \varphi_ z)(0)$ as follows
$$\|f\|_{\mathcal{B}}\asymp |f(0)|+\sup_{z\in \Bn} |\widetilde{\nabla} f(z)|.$$
The case $0<p_ 1\le p_ 2$ is completely settled in \cite{bl} (actually the results are stated for the unweighted Bergman spaces $A^p$, but the proofs works also for the weighted case). The description for the case $p_ 1=p_ 2=1$ is that $f$ must belong to the so called logarithmic Bloch space, a result that goes back to the one dimensional result of Attele \cite{At}. Concerning estimates with loss, in \cite{bl} Bonami and Luo obtained a description for the case $0<p_ 2<p_ 1$ with $p_ 2<1$ (again the result in \cite{bl} is stated for the unweighted Bergman spaces). Thus, in view of Corollary \ref{C4}, to complete the picture it remains to deal with the case $p_1>p_ 2=1$ (this problem is also open for the unit disk). In that case, also in \cite{bl}, some partial results are obtained (again for the unweighted case). Mainly, they provide a pointwise estimate that is necessary for the small Hankel operator to be bounded, and they show that the condition
\begin{equation}\label{C-BLZ}
f(z) \log \frac{2}{1-|z|} \in L^{p'_ 1}(\Bn,dv_{\alpha})
\end{equation}
is sufficient. Moreover, they conjecture that the previous condition is also necessary. We have not been able to prove the conjecture, but we are going to shed some light on that problem.

\begin{thm}\label{Tm4}
Let $f\in H(\Bn)$, $\alpha>-1$ and $p_ 1>1$. Let $p'_ 1$ be the conjugate exponent of $p_ 1$. Then $h^{\a}_ {\overline{f}}:A^{p_ 1}_{\a}\rightarrow \overline{A^{1}_{\a}}$ is bounded if and only if the multiplication operator $M_ f:\mathcal{B}\rightarrow A^{p'_ 1}_{\a}$ is bounded.
\end{thm}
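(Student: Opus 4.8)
The plan is to deduce the equivalence from two duality identities together with the mapping properties of the Bergman projection. First I would replace $h^{\a}_{\overline{f}}$ by the conjugate-linear operator $S^{\a}_f g=P_{\a}(f\bar g)$, so that $\|h^{\a}_{\overline{f}}\|=\|S^{\a}_f\|_{A^{p_1}_{\a}\to A^{1}_{\a}}$. Since the target $A^{1}_{\a}$ is not reflexive, instead of Theorem~\ref{duality} I would use its limiting case, the Bloch duality $(A^{1}_{\a})^{*}=\mathcal B$ under the pairing $\langle\cdot,\cdot\rangle_{\a}$ (standard, see \cite{zhu2}), together with the reflexive duality $(A^{p_1'}_{\a})^{*}=A^{p_1}_{\a}$ coming from Theorem~\ref{duality} (with $q=p_1'$ and $\beta=\beta'=\a$). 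Computing, for $\psi\in\mathcal B$, the identity $\langle S^{\a}_f g,\psi\rangle_{\a}=\overline{\langle g,P_{\a}(f\bar\psi)\rangle_{\a}}$, these two dualities turn both operator norms into suprema of the same shape, namely $\|h^{\a}_{\overline{f}}\|\asymp\sup_{\psi\in\mathcal B}\|P_{\a}(f\bar\psi)\|_{p_1',\a}/\|\psi\|_{\mathcal B}$ and $\|M_f\|=\sup_{\psi\in\mathcal B}\|f\psi\|_{p_1',\a}/\|\psi\|_{\mathcal B}$. Thus the statement reduces to comparing these two quantities, whose only difference is the conjugation $\bar\psi$ versus $\psi$ inside the projection.

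The implication ``$M_f$ bounded $\Rightarrow h^{\a}_{\overline{f}}$ bounded'' is then immediate. Since $p_1'>1$, the projection $P_{\a}$ is bounded on $L^{p_1'}(\Bn,dv_{\a})$ (recalled in Section~\ref{intro}), so $\|P_{\a}(f\bar\psi)\|_{p_1',\a}\lesssim\|f\bar\psi\|_{L^{p_1'}(dv_{\a})}=\|f\psi\|_{p_1',\a}\le\|M_f\|\,\|\psi\|_{\mathcal B}$, whence $\|h^{\a}_{\overline{f}}\|\lesssim\|M_f\|$.

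The converse ``$h^{\a}_{\overline{f}}$ bounded $\Rightarrow M_f$ bounded'' is where I expect the main obstacle, and it is exactly the point where the endpoint $p_2=1$ escapes Theorem~\ref{hankel}: condition~\eqref{alpha} there degenerates to $1+\a>1+\a$, so the duality step carried out in Section~\ref{proof-thm1} breaks down. The difficulty is that the hypothesis controls $P_{\a}(f\bar\phi)$ with the \emph{anti-analytic} input $\bar\phi$, whereas $\|M_f\|$ demands control of the \emph{analytic} multiplier $f\psi$; as $\bar\phi$ ranges over anti-analytic Bloch functions while $\psi$ ranges over analytic ones, there is no purely formal substitution available. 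I would attack it by Bergman duality, reducing to the bilinear estimate $|\langle f\psi,g\rangle_{\a}|\lesssim\|h^{\a}_{\overline{f}}\|\,\|\psi\|_{\mathcal B}\,\|g\|_{p_1,\a}$, and then run the testing machinery of Section~\ref{proof-thm1}: expand $g$ via the atomic decomposition of Theorem~\ref{AtomicD}, insert Rademacher functions, apply Khinchine's inequality, and use the identity $Q_{\a}u=\overline{P_{\a}\bar u}$ to transfer the projection and the conjugation. To absorb the mismatch between $\psi$ and $\bar\psi$ I would bring in the radial-derivative descriptions of both norms, namely the Bloch estimate $(1-|z|^{2})|R\psi(z)|\lesssim\|\psi\|_{\mathcal B}$ and the Littlewood--Paley equivalence for $\|\cdot\|_{p_1',\a}$, and split $R(f\psi)=\psi\,Rf+f\,R\psi$: the term $f\,R\psi$ is controlled directly by the Bloch bound, while the principal term $\psi\,Rf$ should be reduced to the Hankel hypothesis through a reproducing-kernel computation parallel to the evaluation of $S^{\a}_fg_k(a_k)$ in Section~\ref{proof-thm1}. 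Reconciling this conjugation at the endpoint---showing that control of the anti-analytic data suffices to bound the analytic multiplier $f\psi$---is the crux of the argument, and the genuine reason the case $p_1>p_2=1$ is not a formal corollary of Theorem~\ref{hankel}.
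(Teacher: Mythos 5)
Your reduction and your easy direction are essentially the paper's own: the symmetric identity $\langle S^{\a}_f g,\psi\rangle_{\a}=\langle f,g\psi\rangle_{\a}=\langle S^{\a}_f\psi,g\rangle_{\a}$, combined with $(A^1_{\a})^*=\mathcal{B}$ and $(A^{p_1}_{\a})^*=A^{p_1'}_{\a}$, converts boundedness of $h^{\a}_{\overline{f}}$ into boundedness of $S^{\a}_f\colon\mathcal{B}\to A^{p_1'}_{\a}$, and then ``$M_f$ bounded $\Rightarrow$ Hankel bounded'' follows from the $L^{p_1'}$-boundedness of $P_{\a}$. One caveat: to pass from the Bloch-side bound back to an operator into $A^1_{\a}$ you must use the predual duality $(\mathcal{B}_0)^*=A^1_{\a}$ rather than $(A^1_{\a})^*=\mathcal{B}$; the paper routes this through the little Bloch space, and your ``$\asymp$'' identity silently needs it.

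The converse, which you yourself flag as the crux, is where your proposal has a genuine gap: it is a plan rather than a proof, and the plan omits the one ingredient that makes the theorem true. Concretely: (a) your bound for the term $fR\psi$ requires $\|f\|_{p_1',\a}\lesssim\|h^{\a}_{\overline{f}}\|$, which you never establish (the paper gets it from $|\langle g,f\rangle_{\a}|=|h^{\a}_{\overline{f}}g(0)|\lesssim\|h^{\a}_{\overline{f}}g\|_{1,\a}$ plus duality); (b) fatally, the principal term $\psi Rf$ is not reachable by ``the testing machinery of Section \ref{proof-thm1}''. That machinery --- atoms, Rademacher functions, Khinchine, subharmonicity --- produces exactly the discrete estimate \eqref{sf4}, which here (with $q=p_1'$ and $\beta=\a$) yields nothing beyond $\|f\|_{p_1',\a}\lesssim\|S^{\a}_f\|$; and at this endpoint membership of $f$ in $A^{p_1'}_{\a}$ is strictly weaker than boundedness of $h^{\a}_{\overline{f}}$, as the logarithmic necessary conditions \eqref{Cor-1} and \eqref{Cor-2} show (this is precisely why Theorem \ref{hankel} imposes \eqref{alpha}, which fails when $p_2=1$ and $\a_2=\a$). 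If you instead try to absorb $\psi$ through the pointwise bound $|\psi(z)|\lesssim\|\psi\|_{\mathcal{B}}\log\frac{2}{1-|z|^2}$, you are attempting to prove a log-weighted $L^{p_1'}$ estimate on $(1-|z|^2)Rf$, which is essentially the Bonami--Luo conjecture \eqref{C-BLZ} that the paper explicitly states it cannot prove. The paper's actual argument avoids the Littlewood--Paley split altogether: it writes $f\overline{\psi}=S^{\a}_f\psi+(f\overline{\psi}-S^{\a}_f\psi)$, represents the error via the reproducing formula as an integral against $f(w)\,(\overline{\psi(z)-\psi(w)})$, and controls it by an $\varepsilon$-H\"{o}lder trick together with the key Lemma \ref{LBP}, which dominates $\int_{\Bn}|\psi(z)-\psi(a)|^p\,|1-\langle a,z\rangle|^{-b}\,dv_{\sigma}(z)$ by the same integral with $|\widetilde{\nabla}\psi|^p$ in place of the difference --- that is, it exploits the cancellation in $\psi(z)-\psi(w)$ rather than the pointwise size of $\psi$. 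Without Lemma \ref{LBP}, or an equivalent device for comparing $f\overline{\psi}$ with its Bergman projection, your outline does not close.
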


Before going to the proof we need first some preparation. First of all, recall that the Bloch space is the dual of $A^{1}_{\a}$ under the integral pairing $\langle \,,\,\rangle _{\alpha}$ (see \cite[Theorem 3.17]{zhu2}). We also need the following lemma, whose one dimensional analogue is essentially proved in \cite{BP}.

\begin{lemma}\label{LBP}
Let $1<p<\infty$, $\sigma>-1$, and $n+1+\sigma<b$.
Then
\begin{displaymath}
\int_{\Bn}\frac{|f(z)-f(a)|^p}{|1-\langle a,z\rangle|^{b}}\,dv_{\sigma}(z)\lesssim
\int_{\Bn} |\widetilde{\nabla}f(z)|^p\,\frac{dv_{\sigma}(z)}{|1-\langle a,z\rangle|^{b}}
\end{displaymath}
for any $f\in H(\Bn)$ and $a\in \Bn$.
\end{lemma}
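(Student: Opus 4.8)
The plan is to transfer the oscillation to the origin and then represent it as an integral of the gradient. First I would reduce to the case where the base point is $0$. Setting $F=f\circ\varphi_a$, one has $F(0)=f(a)$, and since the modulus of the invariant gradient is M\"obius invariant, $|\widetilde{\nabla}F(w)|=|\widetilde{\nabla}f(\varphi_a(w))|$. Performing the change of variable $z=\varphi_a(w)$ in both integrals, the real Jacobian $\big((1-|a|^2)/|1-\langle w,a\rangle|^2\big)^{n+1}$, the factor $(1-|z|^2)^\sigma$, and the transformed kernel (using the identity $|1-\langle a,\varphi_a(w)\rangle|=(1-|a|^2)/|1-\langle w,a\rangle|$) appear identically on both sides. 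Hence the powers of $1-|a|^2$ cancel, and the claim becomes equivalent to
$$
\int_{\Bn}|F(w)-F(0)|^p\,d\mu_a(w)\lesssim \int_{\Bn}|\widetilde{\nabla}F(w)|^p\,d\mu_a(w),
$$
where $d\mu_a(w)=(1-|w|^2)^\sigma\,|1-\langle w,a\rangle|^{\,b-2(n+1+\sigma)}\,dv(w)$, with a constant independent of $a$ and $F$.

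Next I would estimate the oscillation about the origin by the gradient. Since $F$ is holomorphic, $F(w)-F(0)=\int_0^1\frac{d}{dt}F(tw)\,dt$ and $\big|\frac{d}{dt}F(tw)\big|\le |w|\,|\nabla F(tw)|$. The pointwise ingredient here is the comparison $(1-|\zeta|^2)\,|\nabla F(\zeta)|\le |\widetilde{\nabla}F(\zeta)|$, which holds because $\widetilde{\nabla}F(\zeta)=\nabla F(\zeta)\,D\varphi_\zeta(0)$ and the smallest singular value of $D\varphi_\zeta(0)$ is $1-|\zeta|^2$ (attained in the radial direction, the tangential ones being $\sqrt{1-|\zeta|^2}$). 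Combining these I obtain the majorization
$$
|F(w)-F(0)|\lesssim \int_0^1\frac{|\widetilde{\nabla}F(tw)|}{1-t^2|w|^2}\,dt.
$$

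The final and most delicate step is to insert this bound into the left-hand integral and recover the right-hand side. I would apply Minkowski's integral inequality in the variable $t$ to pull the $L^p(d\mu_a)$-norm inside the $t$-integral, and then for each fixed $t$ change variables $\xi=tw$ to reduce matters to an integral of $|\widetilde{\nabla}F(\xi)|^p(1-|\xi|^2)^{-p}$ against a kernel of the expected type. The hypothesis $n+1+\sigma<b$ is exactly what ensures, via the Forelli--Rudin estimate of Lemma~\ref{LA}, that the resulting kernel integrals converge and recombine into $\int_{\Bn}|\widetilde{\nabla}F|^p\,d\mu_a$ with a constant uniform in $a$. I expect this last estimate to be the main obstacle: one must control the radial integration against the $a$-dependent weight $|1-\langle w,a\rangle|^{\,b-2(n+1+\sigma)}$ and verify convergence of the $t$-integral uniformly in $a$, balancing the denominator $1-t^2|w|^2$ against the size condition on $b$. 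This is the technical heart of the argument, its one-dimensional model being the content of \cite{BP}.
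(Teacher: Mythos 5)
Your first two steps are correct, and in fact they coincide with the paper's own proof run in the opposite order: the paper also transfers the estimate between $a$ and the origin via the change of variables $z=\varphi_a(\zeta)$, using $1-\langle a,\varphi_a(\zeta)\rangle=(1-|a|^2)/(1-\langle a,\zeta\rangle)$ and the M\"obius invariance of $|\widetilde{\nabla}f|$, and it uses the same pointwise comparison in the form $(1-|\zeta|^2)|R(f\circ\varphi_a)(\zeta)|\le|\widetilde{\nabla}(f\circ\varphi_a)(\zeta)|$. The genuine gap is exactly the step you defer to the end: for $p>1$, Minkowski's integral inequality in the $t$-variable is fatally lossy here, and no Forelli--Rudin estimate afterwards can repair it, because the quantity Minkowski produces is already too large to be bounded by $\|\widetilde{\nabla}F\|_{L^p(d\mu_a)}$. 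Note also that the hypothesis $b>n+1+\sigma$ plays a different role than the one you assign to it: in the paper it only ensures that the transferred kernel exponent $2(n+1+\sigma)-b$ is admissible for the origin-centered estimate; it cannot help with your $t$-integration, since the failure below already occurs for $a=0$, where the weight involving $b$ is identically $1$.

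Here is the failure, concretely. Take $n=1$, $a=0$, $\sigma=0$ (so $d\mu_a=dv$), and $F$ with $F'(\xi)=(1-\xi)^{-\gamma}$, where $2/p<\gamma<1+2/p$. Since $|\widetilde{\nabla}F(\xi)|\asymp(1-|\xi|^2)|F'(\xi)|$, the factors $(1-t^2|w|^2)$ cancel exactly after your substitution, and for $1/2\le t<1$,
\begin{displaymath}
\left(\int_{\D}\frac{|\widetilde{\nabla}F(tw)|^p}{(1-t^2|w|^2)^p}\,dv(w)\right)^{1/p}\asymp\left(\int_{t\D}|1-\xi|^{-\gamma p}\,dv(\xi)\right)^{1/p}\asymp(1-t)^{\frac{2}{p}-\gamma},
\end{displaymath}
because $\gamma p>2$. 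Hence the bound produced by Minkowski is at least
\begin{displaymath}
\int_{1/2}^{1}(1-t)^{\frac{2}{p}-\gamma}\,dt\asymp\left(1+\tfrac{2}{p}-\gamma\right)^{-1},
\end{displaymath}
whereas
\begin{displaymath}
\left(\int_{\D}|\widetilde{\nabla}F|^p\,dv\right)^{1/p}\asymp\left(\int_{\D}(1-|\xi|^2)^p\,|1-\xi|^{-\gamma p}\,dv(\xi)\right)^{1/p}\asymp\left(1+\tfrac{2}{p}-\gamma\right)^{-1/p}.
\end{displaymath}
Since $p>1$, the ratio blows up like $(1+2/p-\gamma)^{-(1-1/p)}$ as $\gamma\uparrow 1+2/p$, so no uniform constant exists: the inequality you would need after Minkowski is false, even though for this same family both sides of the lemma are comparable --- the loss is created by Minkowski itself, not by your pointwise bound. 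This is precisely why the paper does not integrate the gradient along rays. Instead, it uses the full-ball reproducing-type representation from \cite[p.~51]{zhu2},
\begin{displaymath}
|f(z)-f(0)|\le C\int_{\Bn}\frac{(1-|w|^2)\,|Rf(w)|\,dv_{\beta-1}(w)}{|1-\langle z,w\rangle|^{n+\beta}},
\end{displaymath}
which spreads the derivative over the whole ball against a Bergman-type kernel, and then H\"older's inequality with a small $\e$-shift of the weights, Fubini's theorem, and the bilinear kernel estimate of \cite[Lemma~2.5]{OF} yield the origin-centered inequality \eqref{EqLem-0} without loss. Replacing your one-dimensional ray integral by such a kernel representation is the missing idea; your reduction to the origin can then be kept as is.
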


\begin{proof}
We are going to prove first that, for $0\le t<n+1+\sigma$,
\begin{equation}\label{EqLem-0}
\int_{\Bn}\frac{|f(z)-f(0)|^p}{|1-\langle a,z\rangle |^t}\,dv_{\sigma}(z)\lesssim \int_{\Bn}\!\!\frac{(1-|w|^2)^p\,|Rf(w)|^p }{|1-\langle a,w\rangle |^t}\,dv_{\sigma}(w).
\end{equation}
From \cite[p.51]{zhu2}, for $\beta$ big enough, say $\beta\ge 1+\sigma$, we have
$$|f(z)-f(0)|\le C \int_{\Bn} \frac{(1-|w|^2)\,|Rf(w)|\,dv_{\beta-1}(w)}{|1-\langle z,w \rangle |^{n+\beta}}.$$
Take a small number $\e>0$ with $\sigma-\e \max(p,p')>-1$, where $p'$ denotes the conjugate exponent of $p$, and $t<n+1+\sigma-\varepsilon p$. An application of H\"{o}lder's inequality and Lemma \ref{LA} yields
\begin{equation*}
|f(z)-f(0)|^p\lesssim (1-|z|^2)^{-\varepsilon p}\int_{\Bn} \frac{(1-|w|^2)^p\,|Rf(w)|^p\,dv_{\beta-1+\varepsilon p}(w)}{|1-\langle z,w \rangle |^{n+\beta}}.
\end{equation*}
This together with Fubini's theorem and \cite[Lemma 2.5]{OF} gives
\begin{equation*}
\begin{split}
\int_{\Bn}&\frac{|f(z)-f(0)|^p}{|1-\langle a,z\rangle |^t}\,dv_{\sigma}(z)
\\
&\lesssim \int_{\Bn}\!\!(1-|w|^2)^p\,|Rf(w)|^p \left(\int_{\Bn} \!\frac{dv_{\sigma-\varepsilon p}(z)}{|1-\langle a,z\rangle |^t\,|1-\langle z,w \rangle |^{n+\beta}}\right )dv_{\beta-1+\varepsilon p}(w)
\\
&\lesssim \int_{\Bn}\!\!\frac{(1-|w|^2)^p\,|Rf(w)|^p }{|1-\langle a,w\rangle |^t}\,dv_{\sigma}(w)
\end{split}
\end{equation*}
proving \eqref{EqLem-0}. Now, a change of variables $z=\varphi_ a(\zeta)$ gives (see \cite[Proposition 1.13]{zhu2})

\begin{displaymath}
\int_{\Bn}\!\!\!\frac{|f(z)\!-\!f(a)|^p}{|1-\langle a,z\rangle|^{b}}\,dv_{\sigma}(z)=\!\int_{\Bn}\!\!\!\frac{|(f\circ \varphi_ a)(\zeta)\!-\!(f\circ \varphi_ a)(0)|^p}{|1-\!\langle a,\varphi_ a(\zeta)\rangle|^{b}}\frac{(1-\!|a|^2)^{n+1+\sigma}}{|1\!-\!\langle a,\zeta \rangle |^{2(n+1+\sigma)}}\,dv_{\sigma}(\zeta).
\end{displaymath}
\mbox{}
\\
From \cite[Lemma 1.3]{zhu2} we have
\begin{displaymath}
1-\langle a, \varphi_ a(\zeta) \rangle =1-\langle \varphi_ a(0), \varphi_ a(\zeta) \rangle=\frac{1-|a|^2}{1-\langle a,\zeta \rangle }.
\end{displaymath}
Therefore we obtain

\begin{displaymath}
\int_{\Bn}\!\!\!\frac{|f(z)\!-\!f(a)|^p}{|1-\langle a,z\rangle|^{b}}\,dv_{\sigma}(z)=(1-\!|a|^2)^{n+1+\sigma-b}\!\int_{\Bn}\!\!\!\frac{|(f\circ \varphi_ a)(\zeta)\!-\!(f\circ \varphi_ a)(0)|^p}{|1-\!\langle a,\zeta\rangle|^{2(n+1+\sigma)-b}}\,dv_{\sigma}(\zeta).
\end{displaymath}
Due to our condition $b>n+1+\sigma$, we have $$t=2(n+1+\sigma)-b<n+1+\sigma$$ and we can apply \eqref{EqLem-0} to get

\begin{displaymath}
\int_{\Bn}\!\!\frac{|f(z)\!-\!f(a)|^p}{|1-\langle a,z\rangle|^{b}}\,dv_{\sigma}(z)\lesssim (1-\!|a|^2)^{n+1+\sigma-b}\!\int_{\Bn}\!\!\!\frac{(1-|\zeta|^2)^p\,|R(f\circ \varphi_ a)(\zeta)|^p}{|1-\langle a,\zeta\rangle|^{2(n+1+\sigma)-b}}\,dv_{\sigma}(\zeta).
\end{displaymath}
\mbox{}
\\
Since
\[
(1-|\zeta|^2)\,|R(f\circ \varphi_ a)(\zeta)|\le |\widetilde{\nabla}(f\circ \varphi_ a)(\zeta)|=|\widetilde{\nabla}f( \varphi_ a(\zeta))|,
\]
another change of variables $w=\varphi_ a(\zeta)$ finally gives

\begin{displaymath}
\int_{\Bn}\!\frac{|f(z)\!-\!f(a)|^p}{|1-\langle a,z\rangle|^{b}}\,dv_{\sigma}(z)\lesssim \int_{\Bn}\!\frac{|\widetilde{\nabla}f(w)|^p}{|1-\langle a,w\rangle|^{b}}\,dv_{\sigma}(w).
\end{displaymath}
completing the proof of the lemma.
\end{proof}
\mbox{}
\\
After these preparations, we are now ready for the proof of Theorem \ref{Tm4}.\\

\begin{proof}[\textbf{Proof of Theorem \ref{Tm4}}]
Assume first that the small Hankel operator $h^{\a}_ {\overline{f}}:A^{p_ 1}_{\a}\rightarrow \overline{A^{1}_{\a}}$ is bounded.
Let $g\in A^{p_ 1}_{\alpha}$. From the pointwise estimate for Bergman spaces, we get
$$|\langle g,f\rangle_{\alpha}|=|h_{\overline{f}}^{\alpha} g(0)|\le C \|h^{\alpha}_{\overline{f}} g \|_{1,\alpha}\le C \|h_{\overline{f}}^{\a}\|\cdot \|g\|_{p_ 1,\a}.$$
Therefore, by duality, we have that $f\in A^{p'_ 1}_{\a}$ with
\begin{equation}\label{EqB-0}
\|f\|_{p'_ 1,\a}\le C \|h_{\overline{f}}^{\a}\|.
\end{equation}

Recall that $h^{\a}_ {\overline{f}}:A^{p_ 1}_{\a}\rightarrow \overline{A^{1}_{\a}}$ is bounded, if and only if, $S^{\a}_ {f}:A^{p_1}_{\a}\rightarrow A^{1}_{\a}$ is bounded, with $\|S^{\a}_ f\|\asymp \|h^{\a}_{\overline{f}}\|$.
Also, since for any $g\in A^{p_1}_{\a}$ and $h\in\mathcal{B}$,
\begin{equation}\label{sfg}
\langle S_f^{\a}g,h\rangle_{\a}=\langle f,gh\rangle_{\a}=\langle S_f^{\a}h,g\rangle_{\a}
\end{equation}
we know that  $S^{\a}_ {f}: \mathcal{B}\rightarrow A^{p_1'}_{\a}$ is bounded, and moreover,
we have
 $$\|S^{\a}_ f\|_{\mathcal{B}\rightarrow A^{p'_ 1}_{\a}}\lesssim  \|h^{\a}_{\overline{f}}\|.$$
For $g$ in the Bloch space $\mathcal{B}$, one has
\begin{equation}\label{EqB-1}
\begin{split}
\|M_ f g\|^{p'_ 1}_{p'_ 1,\a}&=\int_{\Bn} |f(z)\,\overline{g(z)}\,|^{p'_ 1}\,dv_{\a}(z)
\\
&\lesssim \int_{\Bn} |S^{\a}_f g(z)\,|^{p'_ 1}\,dv_{\a}(z)+\int_{\Bn} |f(z)\,\overline{g(z)}-S_ f^{\a} g(z)|^{p'_ 1}\,dv_{\a}(z).
\end{split}
\end{equation}
Due to the boundedness of  $S^{\a}_ {f}:\mathcal{B}\rightarrow A^{p'_ 1}_{\a}$,
\begin{equation}\label{EqB-2}
\int_{\Bn} |S^{\a}_f g(z)\,|^{p'_ 1}\,dv_{\a}(z)\le \|S^{\a}_ f\|^{p'_ 1}_{\mathcal{B}\rightarrow A^{p'_ 1}_{\a}}\cdot \|g\|_{\mathcal{B}}^{p'_ 1}
\lesssim \|h^{\a}_{\overline{f}}\|^{p'_ 1}\cdot \|g\|_{\mathcal{B}}^{p'_ 1}.
\end{equation}
On the other hand, by the reproducing formula for Bergman spaces and H\"{o}lder's inequality,
\begin{displaymath}
\begin{split}
 |f(z)\,&\overline{g(z)}-S_ f^{\a} g(z)|^{p'_ 1}=\left |\int_{\Bn} \frac{f(w)\,(\overline{g(z)-g(w)})}{(1-\langle z,w \rangle)^{n+1+\a}}\,dv_{\alpha}(w) \right |^{p'_ 1}
\\
& \le  \left ( \int_{\Bn}\frac{|f(w)|^{p'_ 1}}{|1-\langle w,z \rangle|^{n+1+\a}}\,dv_{\a+\e p'_ 1}(w) \right ) \left ( \int_{\Bn}\frac{|g(z)-g(w)|^{p_ 1}}{|1-\langle w,z \rangle|^{n+1+\a}}\,dv_{\a-\e p_ 1}(w) \right )^{\frac{p'_ 1}{p_ 1}},
\end{split}
\end{displaymath}
where $\e>0$ satisfies $\a-\e \max(p_ 1,p'_ 1)>-1$. Using Lemma \ref{LBP} and Lemma \ref{LA} we get
\begin{displaymath}
\begin{split}
\int_{\Bn}\frac{|g(z)-g(w)|^{p_ 1}}{|1-\langle w,z \rangle|^{n+1+\a}}\,dv_{\a-\e p_ 1}(w)&\lesssim \int_{\Bn}\frac{|\widetilde{\nabla} g(w)|^{p_ 1}}{|1-\langle w,z \rangle|^{n+1+\a}}\,dv_{\a-\e p_ 1}(w)\\
\\
&\lesssim \|g\|_{\mathcal{B}}^{p_ 1} (1-|z|^2)^{-\e p_ 1}.
\end{split}
\end{displaymath}
Therefore, this together with Fubini's theorem, Lemma \ref{LA} and the estimate \eqref{EqB-0} gives
\begin{equation}\label{EqB-3}
\begin{split}
 \int_{\Bn}|f(z)\,\overline{g(z)}&-S_ f^{\a} g(z)|^{p'_ 1}\,dv_{\a}(z)
 \\
 &
\le C  \|g\|_{\mathcal{B}}^{p'_ 1} \int_{\Bn} |f(w)|^{p'_ 1}\left ( \int_{\Bn}\frac{dv_{\a-\e p'_ 1}(z)}{|1-\langle w,z \rangle|^{n+1+\a}} \right )dv_{\a+\e p'_ 1}(w)
\\
&
\le C \|g\|_{\mathcal{B}}^{p'_ 1}\cdot \|f\|_{p'_ 1,\a}^{p'_ 1}\le C \|h_{\overline{f}}^{\a}\|^{p'_ 1}\cdot \|g\|_{\mathcal{B}}^{p'_ 1}.
\end{split}
\end{equation}
Putting together the estimates \eqref{EqB-1}, \eqref{EqB-2} and \eqref{EqB-3} it follows that $M_ f:\mathcal{B}\rightarrow A^{p'_ 1}_{\a}$ is bounded with $\|M_ f\|_{\mathcal{B}\rightarrow A^{p'_ 1}_{\a}}\lesssim \|h_{\overline{f}}^{\a}\|$.\\

Conversely, suppose that $M_ f:\mathcal{B}\rightarrow A^{p'_ 1}_{\a}$ is bounded. By the boundedness of the projection $P_{\a}:L^{p'_1}(\Bn,dv_{\a})\rightarrow A^{p'_ 1}_{\a}$ one deduces that $S_f ^{\a}:\mathcal{B}\rightarrow A^{p'_ 1}_{\a}$ is also bounded,
and so obviously, $S_f ^{\a}:\mathcal{B}_0\rightarrow A^{p'_ 1}_{\a}$ is bounded, where $\mathcal{B}_0$
is the little Bloch space, and it is well-known that
the dual space of $\mathcal{B}_0$ is $A^1_{\a}$ under the integral pair $\langle\,,\,\rangle_{\a}$
(see, for example, Chapter 3 of \cite{zhu2}),
from (\ref{sfg}) we know that $S_f^{\a}:A^{p_1}_{\a}\rightarrow A^{1}_{\a}$ is bounded.
\end{proof}

As a consequence of Theorem \ref{Tm4} we can easily obtain the sufficient and necessary conditions given in \cite{bl} as well as another relevant necessary condition for the boundedness of $h^{\a}_ {\overline{f}}:A^{p_ 1}_{\a}\rightarrow \overline{A^{1}_{\a}}$.
\begin{cor}
Let $f\in H(\Bn)$, $\alpha>-1$ and $p_ 1>1$.
\begin{enumerate}
\item[(i)] If \eqref{C-BLZ} holds, then $h^{\a}_ {\overline{f}}:A^{p_ 1}_{\a}\rightarrow \overline{A^{1}_{\a}}$ is bounded.

\item[(ii)] If $h^{\a}_ {\overline{f}}:A^{p_ 1}_{\a}\rightarrow \overline{A^{1}_{\a}}$ is bounded, then
\begin{equation}\label{Cor-1}
 \sup_{z\in \Bn} (1-|z|^2)^{(n+1+\a)/p'_ 1}\, |f(z)|\,\Big (\log \frac{2}{1-|z|^2} \Big)<\infty
\end{equation}
and
\begin{equation}\label{Cor-2}
\int_{\Bn} |f(z)|^{p'_ 1}\,\Big (\log \frac{2}{1-|z|^2} \Big)^{\frac{p'_ 1}{2}} \!dv_{\alpha}(z)<\infty.
\end{equation}
\end{enumerate}
\end{cor}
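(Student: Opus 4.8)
The plan is to derive the whole corollary from Theorem \ref{Tm4}, which identifies boundedness of $h^{\alpha}_{\overline{f}}:A^{p_1}_{\alpha}\to\overline{A^1_{\alpha}}$ with boundedness of the multiplication operator $M_f:\mathcal{B}\to A^{p_1'}_{\alpha}$. The common engine will be Lemma \ref{LA} together with the logarithmic growth of Bloch functions, $|g(z)|\lesssim\|g\|_{\mathcal{B}}\log\frac{2}{1-|z|^2}$. For the sufficiency in (i), assuming \eqref{C-BLZ}, I would estimate directly, for $g\in\mathcal{B}$,
\[
\|M_f g\|_{p_1',\alpha}^{p_1'}=\int_{\Bn}|f(z)|^{p_1'}|g(z)|^{p_1'}\,dv_{\alpha}(z)\lesssim\|g\|_{\mathcal{B}}^{p_1'}\int_{\Bn}|f(z)|^{p_1'}\Big(\log\tfrac{2}{1-|z|^2}\Big)^{p_1'}dv_{\alpha}(z),
\]
and since $\log\frac{2}{1-|z|^2}\asymp\log\frac{2}{1-|z|}$ the last integral is finite by \eqref{C-BLZ}; hence $M_f:\mathcal{B}\to A^{p_1'}_{\alpha}$ is bounded and Theorem \ref{Tm4} yields boundedness of $h^{\alpha}_{\overline{f}}$. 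This direction is routine.

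For the pointwise necessary condition \eqref{Cor-1} I would test $M_f$ on the single family $g_a(z)=\log\frac{2}{1-\langle z,a\rangle}$, $a\in\Bn$, whose Bloch norms are uniformly bounded and which satisfy $|g_a(a)|=\log\frac{2}{1-|a|^2}$. Since $fg_a$ is holomorphic with $|fg_a|=|M_fg_a|$, the Bergman-space pointwise estimate $|(fg_a)(a)|\lesssim(1-|a|^2)^{-(n+1+\alpha)/p_1'}\|fg_a\|_{p_1',\alpha}$ combined with $\|fg_a\|_{p_1',\alpha}\le\|M_f\|\,\|g_a\|_{\mathcal{B}}\lesssim\|M_f\|$ gives exactly \eqref{Cor-1} upon taking the supremum over $a$.

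The substantive part is \eqref{Cor-2}, where the exponent $p_1'/2$ on the logarithm signals a square-function/Khinchine argument of Luecking type, exactly as in the proof of Theorem \ref{hankel}. Fix an $r$-lattice $\{a_k\}$ and set $\phi_k(z)=(1-|a_k|^2)^c(1-\langle z,a_k\rangle)^{-c}$ with $c>n/2$; comparing the lattice sum $\sum_k|\phi_k(z)|^2$ with the corresponding Forelli--Rudin integral (whose relevant parameter vanishes, giving the logarithmic endpoint companion to Lemma \ref{LA}) yields $\sum_k|\phi_k(z)|^2\asymp\log\frac{2}{1-|z|^2}$. With Rademacher functions $r_k(s)$, put $g_s=\sum_k r_k(s)\phi_k$. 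Applying $M_f$, raising to the power $p_1'$, integrating in $s\in(0,1)$ and using Fubini and Khinchine's inequality converts $\int_0^1\|M_fg_s\|_{p_1',\alpha}^{p_1'}\,ds$ into
\[
\int_{\Bn}|f(z)|^{p_1'}\Big(\sum_k|\phi_k(z)|^2\Big)^{p_1'/2}dv_{\alpha}(z)\asymp\int_{\Bn}|f(z)|^{p_1'}\Big(\log\tfrac{2}{1-|z|^2}\Big)^{p_1'/2}dv_{\alpha}(z),
\]
which is the quantity in \eqref{Cor-2}, while on the other hand $\int_0^1\|M_fg_s\|_{p_1',\alpha}^{p_1'}\,ds\le\|M_f\|^{p_1'}\int_0^1\|g_s\|_{\mathcal{B}}^{p_1'}\,ds$.

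I expect the main obstacle to be the uniform bound $\int_0^1\|g_s\|_{\mathcal{B}}^{p_1'}\,ds\lesssim1$ for the randomized series: because $\mathcal{B}$ carries a supremum norm, one cannot simply pass the $L^{p_1'}_s$-average through $\sup_z$. The right tool is the square-function estimate $(1-|z|^2)^2\sum_k|R\phi_k(z)|^2\lesssim1$, which again follows from Lemma \ref{LA} (the Forelli--Rudin parameter now equals $2$, producing the factor $(1-|z|^2)^{-2}$ that cancels the weight); from this one must control the Rademacher process $\sup_z(1-|z|^2)\big|\sum_k r_k(s)R\phi_k(z)\big|$ and show its $p_1'$-th moment is finite, e.g. via a Salem--Zygmund/entropy estimate exploiting the separation of the lattice. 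Once this Bloch control is in place, combining the two displays gives \eqref{Cor-2} with constant $\lesssim\|M_f\|^{p_1'}\asymp\|h^{\alpha}_{\overline{f}}\|^{p_1'}$, completing the corollary.
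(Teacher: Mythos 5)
Your treatments of part (i) and of \eqref{Cor-1} are correct and coincide with the paper's: (i) is exactly Theorem \ref{Tm4} combined with the pointwise bound $|g(z)|\le\|g\|_{\mathcal{B}}\log\frac{2}{1-|z|^2}$, and \eqref{Cor-1} is exactly the paper's test of $M_f$ on the family $g_a(w)=\log\frac{2}{1-\langle w,a\rangle}$, which has uniformly bounded Bloch norm, together with the pointwise estimate for Bergman functions.

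For \eqref{Cor-2} you depart from the paper, and your argument has a genuine gap, located precisely at the step you flagged as ``the main obstacle''; moreover it is not repairable in the form you propose. The needed moment bound $\int_0^1\|g_s\|_{\mathcal{B}}^{p_1'}\,ds\lesssim1$ for $g_s=\sum_k r_k(s)\phi_k$ is \emph{false}: in fact $\|g_s\|_{\mathcal{B}}=\infty$ almost surely. The pointwise square-function estimate $(1-|z|^2)^2\sum_k|R\phi_k(z)|^2\lesssim1$ controls the random variable $(1-|z|^2)Rg_s(z)$ at each \emph{fixed} $z$, but Salem--Zygmund works against you when you pass to the supremum, not for you. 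Concretely, split the lattice into generations $A_m=\{k:\,1-|a_k|\asymp2^{-m}\}$, each containing $\asymp2^{mn}$ points. At the atom centers of generation $m$ (which lie at distance $\asymp2^{-m}$ from the boundary) the generation-$m$ part of the process $(1-|z|^2)Rg_s(z)$ takes $\asymp2^{mn}$ essentially independent values of unit variance, so by the sharpness of the Salem--Zygmund bound for comparable coefficients (Sudakov-type minoration), its supremum over the sphere $|z|=1-2^{-m}$ is $\gtrsim\sqrt{m}$ with probability bounded below; since distinct generations use disjoint, hence independent, sets of signs, and the remaining generations contribute only bounded variance at the maximizing point, Borel--Cantelli gives $\sup_z(1-|z|^2)|Rg_s(z)|=\infty$ a.s. This defect cannot be fixed by rescaling the atoms: the same sharpness forces any coefficient choice with a.s.\ bounded Bloch norms to have generation-$m$ $\ell^2$-mass $O(m^{-1/2})$, and then the square function $\sum_k|\lambda_k\phi_k(z)|^2$ is $O\bigl(\log\log\frac{1}{1-|z|}\bigr)$ instead of $\asymp\log\frac{1}{1-|z|}$, which yields a strictly weaker conclusion than \eqref{Cor-2}. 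In short, randomized lattice-atom sums can never witness the full $(\log)^{p_1'/2}$ weight.

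The paper avoids this by observing that boundedness of $M_f:\mathcal{B}\to A^{p_1'}_{\a}$ means that $d\mu_f=|f|^{p_1'}dv_{\a}$ is a $p_1'$-Carleson measure for the Bloch space, and then quoting Proposition 1.4 of Doubtsov, which gives \eqref{Cor-2}. If you want a self-contained probabilistic proof, the correct randomization is lacunary rather than atomic: take $g_s(z)=\sum_m r_m(s)\,\langle z,\zeta\rangle^{2^m}$ with $\zeta\in\Sn$, one term per dyadic block. Its Bloch norm is bounded uniformly in the signs (lacunary series with bounded coefficients, composed with the holomorphic map $z\mapsto\langle z,\zeta\rangle$ of $\Bn$ into $\D$), while its square function is $\sum_m|\langle z,\zeta\rangle|^{2\cdot2^m}\asymp\log\frac{2}{1-|\langle z,\zeta\rangle|^2}$; Khinchine then produces the logarithm to the power $p_1'/2$, and an additional average over $\zeta\in\Sn$ is needed to replace $|\langle z,\zeta\rangle|$ by $|z|$, which is essentially how the cited Carleson-measure result is proved.
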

\begin{proof}
Part (i) follows directly from Theorem \ref{Tm4} and the pointwise estimate for Bloch functions
$$|g(z)|\le \|g\|_{\mathcal{B}} \,\log \frac{2}{1-|z|^2}.$$
To prove part (ii), for each $z\in \Bn$, the function $$g_ z(w)=\log \frac{2}{1-\langle w,z \rangle}$$ is in the Bloch space with $\|g_ z\|_{\mathcal{B}}\le C$ with the constant $C$ independent of the point $z$. Therefore, from the pointwise estimate for functions in Bergman spaces, we get
\begin{displaymath}
\begin{split}
(1-|z|^2)^{n+1+\a}\Big (|f(z)| \log \frac{2}{1-|z|^2}\Big )^{p'_ 1}&=(1-|z|^2)^{n+1+\a}|f(z)\,g_ z(z)|^{p'_ 1}
\\
&\lesssim \|fg_ z\|_{p'_ 1,\a}^{p'_ 1}
=\|M_ f g_ z\|_{p'_ 1,\a}^{p'_ 1}
\\
&\le \|M_ f\|_{\mathcal{B}\rightarrow A^{p'_ 1}_{\a}}\cdot \|g_ z\|_{\mathcal{B}}^{p'_ 1}
\lesssim \|M_ f\|_{\mathcal{B}\rightarrow A^{p'_ 1}_{\a}},
\end{split}
\end{displaymath}
and \eqref{Cor-1} follows due to Theorem \ref{Tm4}. The necessity of \eqref{Cor-2} is also a consequence of Theorem \ref{Tm4}. Indeed, clearly  $M_ f:\mathcal{B}\rightarrow A^{p'_ 1}_{\a}$ is bounded if and only if the measure $d\mu_ f(z)=|f(z)|^{p'_ 1}\,dv_{\alpha}(z)$ is a $p'_1$- Carleson measure for the Bloch space (see \cite{Doub,GPGR} for the definition); and by Proposition 1.4 in \cite{Doub} (the one dimensional case appears in \cite{GPGR} and \cite{Limp}) this implies \eqref{Cor-2}, finishing the proof.
\end{proof}
The established connection between Hankel operators on Bergman spaces and Carleson measures for the Bloch space makes even more interesting the problem (as far as we know, still open) of describing those measures.\\

\textbf{Acknowledgments:} The authors would like to thank the referee for valuable
comments that improved the final version of the paper.

\end{document}